\def\inte#1{
\displaystyle\mathop{#1\kern0pt}^\circ }
\def\virgp{\raise 2pt\hbox{,}}
\def\cdotpv{\raise 2pt\hbox{;}}
\def\C{\mathop{\mathbb C\kern 0pt}\nolimits}
\def\DD{\mathop{\mathbb D\kern 0pt}\nolimits}
\def\EE{\mathop{{\mathbb E \kern 0pt}}\nolimits}
\def\K{\mathop{\mathbb K\kern 0pt}\nolimits}
\def\N{\mathop{\mathbb N\kern 0pt}\nolimits}
\def\Q{\mathop{\mathbb Q\kern 0pt}\nolimits}
\def\R{\mathop{\mathbb R\kern 0pt}\nolimits}
\def\SS{\mathop{\mathbb S\kern 0pt}\nolimits}
\def\ZZ{\mathop{\mathbb Z\kern 0pt}\nolimits}
\def\TT{\mathop{\mathbb T\kern 0pt}\nolimits}
\def\P{\mathop{\mathbb P\kern 0pt}\nolimits}
\def \U{\textbf{u}}
\def \B{\textbf{b}}
\newcommand{\ds}{\displaystyle}
\newcommand{\bR}{\mathbb{R}}
\newcommand{\beq}{\begin{equation}}
\newcommand{\eeq}{\end{equation}}
\newcommand{\ben}{\begin{eqnarray}}
\newcommand{\een}{\end{eqnarray}}
\newcommand{\beno}{\begin{eqnarray*}}
\newcommand{\eeno}{\end{eqnarray*}}
\newtheorem{thm}{Theorem}[section]
\newtheorem{lem}{Lemma}[section]
\newtheorem{rmk}{Remark}[section]
\renewcommand{\theequation}{\thesection.\arabic{equation}}
\begin{document}

\title[]
{Liouville type Theorems for the stationary compressible 3D MHD
equations}

\author[J.-M. Kim]{Jae-Myoung Kim}

\thanks{Department of Mathematics Education, Andong National University, Andong 36729, Republic of Korea}
\thanks{E-mail address: jmkim02@anu.ac.kr}

\begin{abstract}
In this paper, we investigate the three dimensional stationary
compressible $3D$ MHD equations, and obtain Liouville type theorems
if a smooth solution $(\rho, \mathbf{u})$ satisfies some suitable
conditions. In particular, our results improve and generalize the
corresponding result.

\end{abstract}


\date{}

\maketitle


\noindent {\sl Keywords:} Liouville type theorem;  compressible 3D
MHD equations; Lorentz space; (local) Morrey space

\vskip 0.2cm

\noindent {\sl AMS Subject Classification (2000):} 35Q30, 76N10.  \\

\renewcommand{\theequation}{\thesection.\arabic{equation}}
\setcounter{equation}{0}
\section{Introduction}
The system of barotropic compressible magnetohydrodynamic(MHD)
equations in $\R^3$ can be read as follows:
\begin{equation}\label{MHD}
    \begin{cases}
    \operatorname{div}(\rho \mathbf{u})=0, \\
    - \nu\Delta \mathbf{u}-(\lambda + \nu)\nabla \operatorname{div} \mathbf{u} +\operatorname{div}(\rho \mathbf{u}\otimes \mathbf{u})-\operatorname{div}(\mathbf{b}\otimes \mathbf{b})+\nabla (P+\frac{|\mathbf{b}|^2}{2}) =0, \\
   - \Delta \mathbf{b}+\operatorname{div}(\rho \mathbf{u}\otimes \mathbf{b})-\operatorname{div}(\mathbf{b}\otimes \mathbf{u})=0,\\
   \operatorname{div}\mathbf{b}=0,
    \end{cases}
\end{equation}
where the vector $\mathbf{u}$ denotes the flow velocity field and
the scalar function $\rho$ represents the density of the fluid.
Without loss of generality, we consider the flows with $\gamma-$law
pressure:
$$P(\rho):=a\rho^\gamma, \quad a>0, \quad \gamma>1.$$
The shear viscosity $\nu$ and bulk viscosity $\lambda$ are both
constants and satisfy
$$\nu>0\ , \ \lambda+\frac{2}{3}\nu>0.$$ We consider the initial value problem of \eqref{MHD},
which requires initial conditions
\begin{equation}\label{ini}
\U(x,0)=\U_0(x)\quad \text{and} \quad \B(x,0)=\B_0(x), \qquad
x\in\R^3.
\end{equation}

Among them, we briefly recall the previous results concerned with
the compressible 3D MHD equations. The local strong solutions to the
compressible MHD with large initial data were obtained, by Vol'pert
and Khudiaev \cite{VK72} as the initial density is strictly positive
and also by Fan and Yu \cite{FW09} as the initial density may
contain vacuum, respectively. Hu and Wang \cite{HW08, HW08-1, HW09,
HW10} proved the global existence and uniqueness of classical
solutions to the Cauchy problem with smooth initial data which are
of small energy but possibly large oscillations. They obtain the
global existence of weak solutions for initial data which may be
discontinuous and contain vacuum states, and where the initial
temperature is allowed to be zero.  Later, Li, Xu and Zhang
\cite{LXZ13} shown if the initial energy is sufficiently small, the
global existence of a regular solution is shown. This solution is
classical with arbitrarily large oscillations, and contain vacuum
states.

On the contrary, to the best of one's knowledge, There is not much
result related to the equations \eqref{MHD}. Yang et. al.
\cite{YGD14} prove existence and uniqueness of strong solutions to
barotropic compressible magnetohydrodynamic (see also \cite{Yan16}
and \cite{Meng17}). Comparing to the steady incompressible MHD
equations, we refer to \cite{ZL04} and \cite{ZZ20}.  The long-time
behavior of a small global solution can be found in \cite{HW10},
\cite{WLY15}.

Regarding Liouville theorems for compressible $3D$ Navier-Stokes
equations, Chae \cite{Chae2012} showed if the smooth solution
$(\rho, \mathbf{u})$ satisfies
\[
     \begin{split}
\|\rho\|_{L^\infty(\mathbb{R}^3)}+\|\nabla
\mathbf{u}\|_{L^2(\mathbb{R}^3)}+\|\mathbf{u}\|_{L^\frac{3}{2}(\mathbb{R}^3)}<\infty.
     \end{split}
\]
then $u\equiv 0$ and $\rho=constant$. After that, through the
suitable decomposition for pressure (Lemma \ref{Lem3-1} below), Li
and Yu in \cite{L2014} shown
\[
     \begin{split}
\|\rho\|_{L^\infty(\mathbb{R}^3)}+\|\nabla \mathbf{u}\|_{L^2(\mathbb{R}^3)}+\|\mathbf{u}\|_{L^\frac{9}{2}(\mathbb{R}^3)}<\infty.
     \end{split}
\]
Later, in the Lorentz framework, Li and Niu \cite{LN21} proved if
$(\rho, \mathbf{u})$ is a smooth solution with $\rho\in L^\infty
(\mathbb{R}^3)$, $\nabla \mathbf{u}\in L^2(\mathbb{R}^3)$ and
$\mathbf{u}\in L^{p, q}(\mathbb{R}^3)$ for $3< p <\frac{9}{2}$,
$3\leq q\leq \infty$ or $p=q=3$. Then $\mathbf{u}\equiv 0$ and
$\rho=constant$ on $\mathbb{R}^{3}$. It is a relaxation result to
the corresponding result \cite{L2014}. In mathematics analysis,
Liouville theorems is classical problem. However, it has not been
resolved so far in a fluid equations.

 In present paper, we study
Liouviile Theorem for the compressible 3D MHD equations in a whose
space in viewpoint of Lorentz space or Morrey space based on the
Cacciopoly-type estimates. In our analysis, in particular, we use
pressure decomposition \eqref{pressure-decom} based on Lemma
\ref{Lem3-1} below, which is a essential tool to deal with the
pressure.

Before we look at the main results, we give some definitions for
functional spaces. Given $1\leq p< \infty , 1\leq q\leq \infty$, we
say that a measurable function $f \in L^{p, q}(\mathbb{R}^3)$ if
$\|f\|_{L^{p, q}(\mathbb{R}^3)}< \infty,$ where
\begin{equation*}\label{2-1a}
\|f\|_{L^{p, q}(\mathbb{R}^3)}:=
    \begin{cases}
        \left(\int_0^{\infty}t^{q-1}|\{x\in \mathbb{R}^3: |f(x)|> t\}|^{\frac{q}{p}}\, dt\right)^{\frac{1}{q}}, \quad \mathrm{if}\, \quad q<+\infty, \\
        \sup\limits_{t>0} t|\{x\in \mathbb{R}^3: |f(x)|> t\}|^{\frac{1}{p}}, \quad \mathrm{if}\,  \quad q=+\infty.
    \end{cases}
\end{equation*}
The space satisfies the continuous embedding (see e.g. \cite{L1950})
\begin{equation}\label{relarion-ll}
  L^{p}(\mathbb{R}^3)=L^{p, p}(\mathbb{R}^3)\hookrightarrow L^{p, q}(\mathbb{R}^3)\hookrightarrow L^{p, \infty}(\mathbb{R}^3) \ , \  p\leq q <\infty.
\end{equation}

In this direction, parallel to the result of Li and Niu, first
result is stated as
\begin{thm}\label{thm-1}
Suppose that $(\rho, \mathbf{u}, \mathbf{b})$ is a smooth solution
to \eqref{MHD} with $\rho\in L^\infty (\mathbb{R}^3)$, $\nabla
\mathbf{u}, \nabla \mathbf{b}\in L^2(\mathbb{R}^3)$ and
$\mathbf{u},\ \mathbf{b}\in L^{p, q}(\mathbb{R}^3)$ for $3< p
<\frac{9}{2}$, $3\leq q\leq \infty$ or $p=q=3$. Then
$\mathbf{u}\equiv 0$ and $\rho=constant$ on $\mathbb{R}^{3}$.
\end{thm}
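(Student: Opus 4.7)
The plan is to prove Theorem \ref{thm-1} via a Caccioppoli-type argument with Lorentz-space estimates, adapting the Navier--Stokes strategy of Li--Niu \cite{LN21} to the MHD system. Fix a standard cutoff $\varphi_R\in C_c^\infty(\mathbb{R}^3)$ with $\varphi_R\equiv 1$ on $B_R$, $\operatorname{supp}\varphi_R\subset B_{2R}$, and $|\nabla\varphi_R|\lesssim R^{-1}$. First I would test the momentum equation against $\mathbf{u}\varphi_R^2$ and the induction equation against $\mathbf{b}\varphi_R^2$, then integrate by parts. Using $\operatorname{div}(\rho\mathbf{u})=0$ and $\operatorname{div}\mathbf{b}=0$ to symmetrize the convective trilinears, the resulting identity takes the form
\begin{equation*}
\nu\int|\nabla\mathbf{u}|^2\varphi_R^2+(\lambda+\nu)\int(\operatorname{div}\mathbf{u})^2\varphi_R^2+\int|\nabla\mathbf{b}|^2\varphi_R^2 \;=\; \mathcal{E}(R),
\end{equation*}
where $\mathcal{E}(R)$ collects remainder terms all supported on the annulus $A_R:=B_{2R}\setminus B_R$.

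The decisive step is to control the pressure contribution $\int P\,\operatorname{div}(\mathbf{u}\varphi_R^2)$. Using the decomposition \eqref{pressure-decom} provided by Lemma \ref{Lem3-1}, I would write $P=P_1+P_2$, where $P_1$ is a double Riesz transform of the stress tensor $\rho\mathbf{u}\otimes\mathbf{u}-\mathbf{b}\otimes\mathbf{b}$ and $P_2$ is proportional to $\operatorname{div}\mathbf{u}$. By boundedness of the Riesz transform on Lorentz spaces, $P_1$ lies in $L^{p/2,q/2}+L^{2,\infty}$, so pairing it with $\operatorname{div}(\mathbf{u}\varphi_R^2)$ gives an annular quantity controlled by $\|\mathbf{u}\|_{L^{p,q}(A_R)}$ and $\|\nabla\mathbf{u}\|_{L^2(A_R)}$; the $P_2$ piece is either absorbed into the $(\lambda+\nu)(\operatorname{div}\mathbf{u})^2$ coercive term by Cauchy--Schwarz or converted into a boundary term via another integration by parts. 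The remaining error families---viscous boundary pieces, convective trilinears such as $\int\rho|\mathbf{u}|^2\mathbf{u}\cdot\nabla\varphi_R^2$, the magnetic stress $\int(\mathbf{b}\otimes\mathbf{b}):\mathbf{u}\otimes\nabla\varphi_R^2$, the cross trilinears $\int(\rho\mathbf{u}\otimes\mathbf{b}-\mathbf{b}\otimes\mathbf{u}):\mathbf{b}\otimes\nabla\varphi_R^2$, and the magnetic pressure $\int|\mathbf{b}|^2\operatorname{div}(\mathbf{u}\varphi_R^2)$---are each bounded by Hölder's inequality in Lorentz spaces, using $\rho\in L^\infty$, $\nabla\mathbf{u},\nabla\mathbf{b}\in L^2$, $\mathbf{u},\mathbf{b}\in L^{p,q}(\mathbb{R}^3)$, together with the cutoff scaling $\||\nabla\varphi_R|\|_{L^s(\mathbb{R}^3)}\lesssim R^{3/s-1}$. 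The range $3<p<9/2$ is precisely the window in which the resulting powers of $R$ are strictly negative, ensuring $\mathcal{E}(R)\to 0$ as $R\to\infty$.

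The main obstacle I anticipate is twofold. First, the magnetic vector $\mathbf{b}$ introduces new trilinears (the terms $\mathbf{b}\otimes\mathbf{b}$ in the momentum equation and $\rho\mathbf{u}\otimes\mathbf{b}$, $\mathbf{b}\otimes\mathbf{u}$ in the induction equation) that carry no $\rho$ factor, so one must verify that the Lorentz-space Hölder bookkeeping closes simultaneously for $\mathbf{u}$ and $\mathbf{b}$ in the same exponent window. Second, the endpoint case $p=q=3$ is scaling-critical and not amenable to direct $R$-decay; here I would instead invoke absolute continuity of the $L^3$-norm on $A_R$ (so that $\|\mathbf{u}\|_{L^3(A_R)},\|\mathbf{b}\|_{L^3(A_R)}\to 0$ by dominated convergence) to conclude vanishing of the remainders. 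Once this is in hand, letting $R\to\infty$ forces $\nabla\mathbf{u}\equiv 0$ and $\nabla\mathbf{b}\equiv 0$, so that $\mathbf{u}$ and $\mathbf{b}$ are constants, necessarily zero by the integrability assumption; inserting $\mathbf{u}\equiv\mathbf{b}\equiv 0$ back into the momentum equation yields $\nabla P=0$ and hence $\rho$ is constant.
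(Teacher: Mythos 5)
Your overall architecture (Caccioppoli estimate with a cutoff, Lorentz--H\"older bookkeeping on the annulus, the window $3<p<9/2$ from the powers $R^{1-6/p}$ and $R^{2-9/p}$) matches the paper's proof. But your treatment of the pressure term has a genuine gap, and it is exactly the point where the compressible problem differs from the incompressible one. You propose to estimate $\int P\,\operatorname{div}(\mathbf{u}\varphi_R^2)\,dx$ by pairing the decomposed pressure with $\operatorname{div}(\mathbf{u}\varphi_R^2)$ and you assert this ``gives an annular quantity.'' It does not: $\operatorname{div}(\mathbf{u}\varphi_R^2)=(\operatorname{div}\mathbf{u})\varphi_R^2+\mathbf{u}\cdot\nabla\varphi_R^2$, and only the second piece is supported on the annulus. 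The bulk piece $\int p_1(\operatorname{div}\mathbf{u})\varphi_R^2\,dx$ converges to $\int_{\R^3}p_1\operatorname{div}\mathbf{u}\,dx$, which is finite but has no reason to vanish, and the bulk piece coming from $p_2=(\lambda+2\nu)\operatorname{div}\mathbf{u}$ equals $+(\lambda+2\nu)\int(\operatorname{div}\mathbf{u})^2\varphi_R^2\,dx$ on the \emph{right-hand} side, which cannot be absorbed by the coercive term $(\lambda+\nu)\int(\operatorname{div}\mathbf{u})^2\varphi_R^2\,dx$ on the left (the net coefficient is $-\nu<0$). Another integration by parts does not remove these bulk contributions.

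The paper (following Li--Yu, Lemma~\ref{Lem3-1} and \eqref{pressure-decom}--\eqref{3-6a}) avoids this entirely by a different manipulation: one writes $\nabla P=\frac{a\gamma}{\gamma-1}\,\rho\,\nabla(\rho^{\gamma-1})=\frac{a\gamma}{\gamma-1}\,\rho\,\nabla P_1$ with $P_1=\rho^{\gamma-1}-(c/a)^{(\gamma-1)/\gamma}$, and then integrates by parts against the \emph{mass-weighted} divergence, using $\operatorname{div}(\rho\mathbf{u})=0$ rather than $\operatorname{div}\mathbf{u}=0$. The bulk term $\int P_1\operatorname{div}(\rho\mathbf{u})\varphi_R\,dx$ then vanishes identically, leaving only the annular term $\int P_1\rho\,\mathbf{u}\cdot\nabla\varphi_R\,dx$, which is estimated via the pointwise bound $|\rho P_1|\lesssim|p_1|+|p_2|$ together with Lemmas~\ref{Lem2-1} and~\ref{Lem2-2}. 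This chain-rule/weighted-divergence step is the essential idea your proposal is missing; without it the Caccioppoli inequality does not close. Two smaller points: the endpoint $p=q=3$ is not scaling-critical here (the prefactors are $R^{-1}$, so the direct decay argument still applies; the restriction $q=3$ at $p=3$ comes from the H\"older factor $\|1\|_{L^{p/(p-3),\,q/(q-3)}}$, not from criticality --- the critical exponent is $p=9/2$), and your conclusion step (constants, hence zero, hence $\rho$ constant) agrees with the paper.
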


In light of \cite[Theorem 1.2]{LLN20}, for $p\geq \frac{9}{2}$,  we
are easily checked to obtain a following results and omit a proof.
\begin{thm}\label{thm-2}
Let Let $\mathcal{C}(R/ 2, R) = \{ x \in \R^3: R/2 < \vert x \vert <
R \}$ and
$$M_{p, q}(R):=
R^{\frac{2}{3}-\frac{3}{p}}(\|\mathbf{u}\|_{L^{p,
q}(\mathcal{C}(R/2,R))}+\|\mathbf{b}\|_{L^{p,
q}(\mathcal{C}(R/2,R))}).$$ Suppose that $(\rho, \mathbf{u},
\mathbf{b})$ is a smooth solution to \eqref{MHD} with $\rho\in
L^\infty (\mathbb{R}^3)$ and $\nabla \mathbf{u},\ \nabla
\mathbf{b}\in L^2(\mathbb{R}^3).$  For $p\geq \frac{9}{2}$, $3\leq
q\leq \infty$, assume that
\begin{equation*}
     \begin{split}
\liminf_{R \rightarrow \infty} M_{\,p, \,q}(R)<\infty,
     \end{split}
\end{equation*}
then
\begin{equation}\label{1-1a}
     \begin{split}
D(\mathbf{u},\mathbf{b}):=\int_{\mathbb{R}^3}|\nabla \mathbf{u}|^2
+|\nabla \mathbf{b}|^2 \, dx \leq \mathcal{C}\liminf_{R \rightarrow
\infty}M^3_{\,p, \,q}(R).
     \end{split}
\end{equation}
If moreover assume
\begin{equation}\label{1-1}
     \begin{split}
\liminf_{R \rightarrow \infty}M^3_{\,p, \,q}(R)\leq \delta
D(\mathbf{u})
\end{split}
     \end{equation}
for some $0<\delta<\frac{1}{\mathcal{C}},$ then $\mathbf{u}\equiv 0$
and $\rho=constant$ on $\mathbb{R}^{3}$.
\end{thm}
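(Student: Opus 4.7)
The plan is to prove \eqref{1-1a} by a Caccioppoli-type estimate on the annulus $\mathcal{C}(R/2,R)$ and then, under \eqref{1-1}, to absorb the resulting right-hand side into the dissipation. The scheme parallels \cite[Theorem 1.2]{LLN20}, with the magnetic field handled by an analogous testing of the induction equation. I fix a cut-off $\phi_R \in C_c^\infty(B_R)$ with $\phi_R\equiv 1$ on $B_{R/2}$, $0\le\phi_R\le 1$ and $|\nabla^k\phi_R|\le CR^{-k}$, test the momentum equation in \eqref{MHD} against $\mathbf{u}\phi_R$ and the induction equation against $\mathbf{b}\phi_R$, integrate by parts, and add the two identities. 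Using $\operatorname{div}(\rho\mathbf{u})=0$ and $\operatorname{div}\mathbf{b}=0$ this yields
\begin{equation*}
\nu\int|\nabla\mathbf{u}|^2\phi_R + (\lambda+\nu)\int|\operatorname{div}\mathbf{u}|^2\phi_R + \int|\nabla\mathbf{b}|^2\phi_R \; = \; \sum_j I_j(R),
\end{equation*}
where each $I_j(R)$ is supported in $\mathcal{C}(R/2,R)$: quadratic terms carrying one factor of $\nabla\mathbf{u}$ or $\nabla\mathbf{b}$ paired against $|\mathbf{u}|$ or $|\mathbf{b}|$ and $\nabla\phi_R$, cubic convective/magnetic terms against $\nabla\phi_R$, and the pressure contribution $\int\nabla P\cdot(\mathbf{u}\phi_R)$.

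The pressure term is treated by the decomposition of Lemma~\ref{Lem3-1}, writing $P=P_1+P_2$ and reducing $\int\nabla P\cdot(\mathbf{u}\phi_R)$ to cubic annular integrals in $(\rho,\mathbf{u},\mathbf{b})$ of the same structure as the convective errors, with $\rho\in L^\infty$ making $\rho$ harmless in every subsequent estimate. Each annular integral is then bounded by $M_{p,q}(R)^3$ via Hölder in Lorentz spaces. The prototype estimate is
\begin{equation*}
\int_{\mathcal{C}(R/2,R)} |\mathbf{u}|^3|\nabla\phi_R|\,dx \;\le\; \frac{C}{R}\,\|\mathbf{u}\|^3_{L^{p,q}(\mathcal{C}(R/2,R))}\,\|\mathbf{1}_{\mathcal{C}(R/2,R)}\|_{L^{p/(p-3),q/(q-3)}} \;\le\; CR^{2-\frac{9}{p}}\|\mathbf{u}\|^3_{L^{p,q}(\mathcal{C}(R/2,R))},
\end{equation*}
where $R^{2-9/p}=(R^{2/3-3/p})^3$ is exactly the scaling encoded in $M_{p,q}$. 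The quadratic error terms carrying a factor of $\nabla\mathbf{u}$ or $\nabla\mathbf{b}$ are handled by Cauchy--Schwarz, estimating the remaining two-factor product in $L^2(\mathcal{C}(R/2,R))$ by a Lorentz Hölder inequality; a small multiple of $\int|\nabla\mathbf{u}|^2\phi_R+\int|\nabla\mathbf{b}|^2\phi_R$ is absorbed into the left-hand side, leaving a contribution comparable to $M_{p,q}(R)^3$. The threshold $p\ge\frac{9}{2}$ is precisely what makes the exponents balance in this absorption.

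Taking $R_k\to\infty$ along a subsequence realizing $\liminf M_{p,q}(R)$, the left-hand side increases to $D(\mathbf{u},\mathbf{b})$, yielding \eqref{1-1a}. Under \eqref{1-1} the inequality becomes $D(\mathbf{u},\mathbf{b})\le\mathcal{C}\delta D(\mathbf{u},\mathbf{b})$, and $\delta<1/\mathcal{C}$ forces $\nabla\mathbf{u}\equiv 0$ and $\nabla\mathbf{b}\equiv 0$. A constant vector $c$ satisfies $\|c\|_{L^{p,q}(\mathcal{C}(R/2,R))}\simeq |c|R^{3/p}$, whence $M_{p,q}(R)\simeq |c|R^{2/3}$, so $\liminf M_{p,q}(R)<\infty$ combined with $\mathbf{u},\mathbf{b}$ constant forces $\mathbf{u}\equiv\mathbf{b}\equiv 0$; substituting back into the momentum equation gives $\nabla P(\rho)\equiv 0$, hence $\rho$ is constant. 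The main obstacle I expect is the endpoint $p=\frac{9}{2}$: there the Hölder--Lorentz balance in the quadratic annular terms is exact, so they cannot be made to vanish by simply letting $R\to\infty$, and the smallness hypothesis \eqref{1-1} is genuinely needed to close the estimate; the pressure step via Lemma~\ref{Lem3-1} is the other essential input that distinguishes the compressible MHD argument from the incompressible Navier--Stokes analysis of \cite{LLN20}.
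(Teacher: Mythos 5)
Your proposal is correct and follows essentially the same route as the paper: the paper in fact omits the proof of this theorem, deducing it from the Caccioppoli-type inequality \eqref{3-8-1} (whose right-hand side is exactly $R^{-1/3}M_{p,q}^{2}(R)+M_{p,q}^{3}(R)$) together with the argument of \cite{LLN20}, and this is precisely what you reconstruct, including the pressure decomposition of Lemma \ref{Lem3-1} and the final absorption under \eqref{1-1}. The only slips are cosmetic: the quadratic annular terms are of order $R^{-1/3}M_{p,q}^{2}(R)$ (hence vanish along the minimizing subsequence) rather than being ``comparable to $M_{p,q}^{3}(R)$'', and at $p=\tfrac{9}{2}$ it is the cubic, not the quadratic, terms whose scaling is exactly critical --- neither point affects the conclusion.
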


Next, we also introduce the definition of the (homogeneous) Morrey
and local Morrey spaces (see e.g. \cite{PGLR1} for more details).
Let $1<p<r<+\infty$, the homogeneous Morrey space
$\dot{M}^{p,r}(\R^3)$ is the set of functions $f \in
L^{p}_{loc}(\R^3)$ such that
\begin{equation*}\label{Def-Morrey}
\Vert f \Vert_{\dot{M}^{p,r}}= \sup_{R>0,\,\, x_0 \in \R^3}
R^{\frac{3}{r}} \left(  \frac{1}{R^{3}} \int_{B(x_0, R)} \vert f (x)
\vert^p dx \right)^{\frac{1}{p}}< +\infty,
\end{equation*} where $B(x_0,R)$ denotes the ball centered at $x_0$ and with radio $R$.
It is well known  $L^{r}(\R^3)\subset L^{r, q}(\R^3)  \subset \dot{M}^{p,r}(\R^3)$, where, for $r \leq q \leq +\infty$.  \\

For $\gamma\geq 0$ and $1<p<+\infty$, we define the \emph{local
Morrey space} $M^{p}_{\gamma}(\R^3)$ as the Banach space of
functions $f \in L^{p}_{loc}(\R^3)$ such that
\begin{equation*}\label{def-local-morrey}
\Vert f \Vert_{M^{p}_{\gamma}} = \sup_{R \geq 1} \left(
\frac{1}{R^{\gamma}} \int_{B(0,R)} \vert f (x) \vert^p
dx\right)^{1/p} <+\infty.
\end{equation*}
Moreover,
\[
\gamma_1 \leq \gamma_2 \Rightarrow M^{p}_{\gamma_1}(\R^3) \subset
M^{p}_{\gamma_2} (\R^3),
\]
and also
\[
1 < p < r <+\infty \Rightarrow \ds{\dot{M}^{p,r}}(\R^3) = M^{p}_{3(
1-p/r)}(\R^3)  \subset M^{p}_{\gamma}(\R^3),
\]
where $\gamma>0$ is constant with $3(1-p/r)<\gamma$.

Finally,  we define the space $M^{p}_{\gamma,0}(\R^3)$ as the set of
functions $f \in M^{p}_{\gamma}(\R^3)$ such that
 \begin{equation*}\label{cond-dec1}
 \lim_{R \to +\infty}  \left( \frac{1}{R^\gamma} \int_{\mathcal{C}(R/2, R)} \vert f (x) \vert^p dx \right)^{1/p} =0.
 \end{equation*}
Relationship between these spaces \cite{Jarrin20-1} is simply known
that for $3<r<9/2$ and for $3(1-3/r)<\delta<1$,
$$L^r(\R^3)\subset L^{r,\infty}(\R^3)\subset \dot{M}^{3,r}(\R^3)\subset
M^{3}_{\delta}(\R^3)\subset M^{3}_{1,0}(\R^3),$$ and for $r=9/2$ and
$9/2<q<+\infty$, $L^{9/2}(\R^3)\subset L^{9/2, q}(\R^3)\subset
M^{3}_{1,0}(\R^3)$.

In the framework of Morrey space, third result is followed as

\begin{thm}\label{Theo:Morrey-sous-critique}  Let $\U , \B\in L^{2}_{loc}(\R^3)$ be a weak solution of \eqref{MHD}.
If $\U, \B \in \dot{M}^{p,r}(\R^3)$ with $3\leq p < r <
\frac{9}{2}$, then $\U \equiv 0\equiv \B$ in $\R^3$.
\end{thm}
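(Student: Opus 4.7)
The plan is to derive a Caccioppoli-type inequality on balls $B(0,R)$ and then exploit the sub-critical Morrey scaling $r < 9/2$ to let $R \to \infty$. First I would fix a smooth cutoff $\varphi_R \in C^\infty_c(B(0,2R))$ with $\varphi_R \equiv 1$ on $B(0,R)$ and $|\nabla \varphi_R| \lesssim R^{-1}$, and test the momentum equation of \eqref{MHD} against $\U\,\varphi_R$ and the induction equation against $\B\,\varphi_R$. After integration by parts, the left-hand side collects the Dirichlet terms
\[
\nu \int |\nabla \U|^2 \varphi_R\, dx + (\lambda+\nu)\int (\dive \U)^2 \varphi_R\, dx + \int |\nabla \B|^2 \varphi_R\, dx,
\]
while the right-hand side contains cutoff-commutator terms supported in the annulus $\mathcal{C}(R,2R)$: a convective part of the form $\int \rho\frac{|\U|^2}{2}\U\cdot\nabla\varphi_R$, the magnetic coupling $\int (\B\otimes\U - \U\otimes\B):\B\otimes\nabla\varphi_R$, and a pressure--energy term $\int \Pi\, \U\cdot \nabla\varphi_R$ with $\Pi := P(\rho) + \tfrac12 |\B|^2$.

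Second, for the $\Pi$-term I would invoke the decomposition of Lemma~\ref{Lem3-1}, splitting $\Pi = \Pi_1 + \Pi_2$, where $\Pi_1$ is the Calderón--Zygmund piece obtained by inverting
\[
\Delta \Pi_1 = \dive\,\dive\bigl(\B\otimes\B - \rho\,\U\otimes\U\bigr) + (\lambda+2\nu)\Delta\,\dive\U,
\]
and $\Pi_2$ is harmonic on the ball. This yields bounds on $\Pi_1$ in $\dot M^{p/2,r/2}$ of quadratic order in $\|\U\|_{\dot M^{p,r}} + \|\B\|_{\dot M^{p,r}}$, while $\Pi_2$ is controlled pointwise by annular averages via the mean-value inequality. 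All the remaining nonlinear pieces are then estimated by Hölder's inequality in Morrey spaces together with the interpolation
\[
\int_{B(0,R)} |f|^3 \, dx \leq C\, R^{3 - 9/r}\, \|f\|^3_{\dot M^{p,r}} \qquad (3 \leq p \leq r),
\]
producing, after accounting for the $R^{-1}$ from $\nabla\varphi_R$, a total bound
\[
\int_{B(0,R)} \bigl(|\nabla \U|^2 + |\nabla \B|^2\bigr)\, dx \leq C\, R^{2 - 9/r} \bigl(\|\U\|_{\dot M^{p,r}} + \|\B\|_{\dot M^{p,r}}\bigr)^3 + (\text{lower-order terms}).
\]

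Finally, since $3 \leq p < r < 9/2$ forces $2 - 9/r < 0$, letting $R\to +\infty$ kills the right-hand side, so $\nabla \U \equiv 0 \equiv \nabla \B$; hence $\U, \B$ are constants, and finiteness of the Morrey norm with $r<\infty$ forces $\U \equiv 0 \equiv \B$. The main obstacle I expect is the harmonic remainder $\Pi_2$: it carries no a priori Morrey estimate, and to prevent it from producing a harmless-looking $O(1)$ contribution one has to iterate the mean-value inequality over dyadic annuli and couple it with the localized Dirichlet bound just derived, so that $\Pi_2$ also inherits the sub-critical decay of the other terms. A secondary subtlety is that the decomposition must simultaneously absorb the thermodynamic pressure $P(\rho)$ and the magnetic pressure $\tfrac12|\B|^2$, which introduces an extra quadratic-in-$\B$ contribution in the Calderón--Zygmund step that must be tracked on exactly the same Morrey scale as the $\rho\U\otimes\U$ term.
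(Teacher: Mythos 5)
Your overall skeleton is the paper's: an annular Caccioppoli inequality obtained by testing with $\U\varphi_R$ and $\B\varphi_R$, the Morrey scaling $\Vert \U\Vert_{L^{p}(\mathcal{C}(R/2,R))}\lesssim R^{\frac{3}{p}-\frac{3}{r}}\Vert \U\Vert_{\dot{M}^{p,r}}$ turning the right-hand side of \eqref{3-8-1} into $R^{2-\frac{9}{r}}(\cdots)^3+R^{1-\frac{6}{r}}(\cdots)^2$, and the observation that $r<\frac92$ makes both exponents negative. Your closing step (a nonzero constant has infinite $\dot{M}^{p,r}$ norm for $r<\infty$) is correct and in fact more careful than the paper's one-line conclusion.

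The genuine gap is in your treatment of the thermodynamic pressure, and it is not the obstacle you flagged. First, for a compressible flow the term $-\int \nabla P\cdot\U\,\varphi_R\,dx$ does \emph{not} reduce to an annulus-supported commutator $\int \Pi\,\U\cdot\nabla\varphi_R\,dx$: integrating by parts also produces the bulk term $\int_{B_{2R}}\Pi\,(\dive\U)\,\varphi_R\,dx$, which lives on the whole ball and has no reason to vanish as $R\to\infty$. Second, your plan for the remainder $\Pi_2$ cannot work as stated: by Lemma~\ref{Lem3-1} the globally defined remainder $P-p_1-p_2$ is a \emph{constant} $c\geq 0$, and no iteration of the mean-value inequality over dyadic annuli will make a nonzero constant ``inherit sub-critical decay''; a direct size estimate gives $\big|c\int \U\cdot\nabla\varphi_R\,dx\big|\lesssim c\,R^{2-\frac{3}{r}}\Vert\U\Vert_{\dot{M}^{3,r}}$, which diverges. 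The paper disposes of both difficulties at once with an algebraic device specific to the $\gamma$-law and the mass equation: writing $\nabla P=\frac{a\gamma}{\gamma-1}\rho\nabla P_1$ with $P_1=\rho^{\gamma-1}-(c/a)^{\frac{\gamma-1}{\gamma}}$ as in \eqref{pressure-relation}, one integrates by parts against the field $\rho\U$, which satisfies $\dive(\rho\U)=0$, so the bulk term vanishes identically and only $\int P_1\rho\,\U\cdot\nabla\varphi_R\,dx$ survives; the subtraction of the constant then yields $|P_1\rho|\lesssim |p_1|+|p_2|$ (see \eqref{3-6a}), which is precisely what restores the quadratic Calder\'on--Zygmund bound you wanted for the singular-integral part. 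Without this cancellation your estimate of the pressure term does not close, so you should replace the ``harmonic-remainder plus dyadic iteration'' step by the $\dive(\rho\U)=0$ argument of Section 2 (whose Caccioppoli output \eqref{3-8-1} is exactly what the paper then feeds into the Morrey scaling).
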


In the framework of local Morrey space, third result is followed as

\begin{thm}\label{Th1} Let $(\rho, \U, \mathbf{b})$
be a smooth solution of \eqref{MHD}.
\begin{enumerate}
    \item[$1)$] Let $\gamma=1$.  If $\U \in M^{3}_{1,0}(\R^3)$ and $\nabla \mathbf{b} \in  M^{3}_{1}(\R^3)$ then we have $\U=0$ and  $\nabla \mathbf{b} =0$.
    \item[$2)$] Let $1<\gamma<3/2$.  If $\U \in M^{3}_{\gamma,0}(\R^3)$ and $\nabla \mathbf{b} \in  M^{3}_{\gamma}(\R^3)$, and moreover if the velocity $\U$ verifies:
    \begin{equation*}\label{cond-dec}
 \lim_{R \to +\infty}  R^{\gamma-1}\left( \frac{1}{R^{\gamma}} \int_{\mathcal{C}(R/2, R)} \vert \U(x) \vert^3 dx \right)^{1/3} =0,
\end{equation*}
    then we have  $\U=0$ and  $\mathbf{b}=0$.
\end{enumerate}
\end{thm}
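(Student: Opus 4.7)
The plan is to adapt the localized energy method developed for the incompressible Navier-Stokes equations in Morrey spaces (see \cite{Jarrin20-1}) to the compressible MHD system \eqref{MHD}, using the pressure splitting of Lemma \ref{Lem3-1} to handle $P=a\rho^\gamma$ under the mere hypothesis $\rho\in L^\infty$.

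\textbf{Localized energy identity.} I pick a smooth cut-off $\phi_R$ with $\phi_R\equiv 1$ on $B(0,R/2)$, $\operatorname{supp}\phi_R\subset B(0,R)$, and $|\nabla^k\phi_R|\lesssim R^{-k}$. Testing the momentum equation against $\U\phi_R^2$ and the induction equation against $\B\phi_R^2$, integrating by parts, and summing, the convective tensors $\rho\U\otimes\U$, $\B\otimes\B$, $\rho\U\otimes\B$ and $\B\otimes\U$ collapse (via $\operatorname{div}(\rho\U)=0$ and $\operatorname{div}\B=0$) onto derivatives of $\phi_R^2$, producing
\begin{equation*}
\nu\!\int\!|\nabla\U|^2\phi_R^2+(\lambda+\nu)\!\int\!|\operatorname{div}\U|^2\phi_R^2+\!\int\!|\nabla\B|^2\phi_R^2\;\leq\;\mathcal{I}_{\mathrm{conv}}(R)+\mathcal{I}_{\mathrm{press}}(R),
\end{equation*}
where both remainders are supported in the annulus $\cC(R/2,R)$. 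Applying Lemma \ref{Lem3-1}, I decompose the effective pressure $P+|\B|^2/2$ as a Riesz bilinear in $\rho\U\otimes\U-\B\otimes\B$ plus a multiple of $\operatorname{div}\U$ (plus a harmless constant); boundedness of Riesz transforms on $\dot M^{p,r}$ and on $M^p_\gamma$ then estimates $\mathcal{I}_{\mathrm{press}}$ by the same annular cubic quantity as $\mathcal{I}_{\mathrm{conv}}$, while the $\operatorname{div}\U$-piece is absorbed on the left.

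\textbf{Scaling and passage to the limit.} After these reductions the right-hand side is dominated by
\begin{equation*}
R^{-1}\!\int_{\cC(R/2,R)}\!\bigl(\rho|\U|^3+|\U|\,|\B|^2\bigr)\,dx+\text{lower order in }R,
\end{equation*}
and a dyadic Poincar\'e/Bogovskii estimate on the annulus trades $\|\B\|_{L^3(\cC)}$ for $R\|\nabla\B\|_{L^3(\cC)}$ (the additive constant in $\B$ is harmless since only $\nabla\B$ enters coercively). For part (1), $\gamma=1$, H\"older presents the right-hand side as $\bigl(R^{-1}\!\int_\cC|\U|^3\bigr)^{1/3}\bigl(\|\U\|_{M^3_1}^2+\|\nabla\B\|_{M^3_1}^2\bigr)$, whose prefactor tends to $0$ by $\U\in M^3_{1,0}$; hence $\nabla\U\equiv 0$ and $\nabla\B\equiv 0$, and vanishing of $\U$ at infinity forces $\U\equiv 0$. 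For part (2), every remainder picks up an extra factor $R^{\gamma-1}$ from the shifted scaling, and the imposed decay on $\U$ is tailored precisely to annihilate it; the continuity equation then yields $\rho\equiv\text{const}$, while the reduced equation $-\Delta\B=0$ together with $\nabla\B\in M^3_\gamma$, $\gamma<3/2$, forces $\B\equiv 0$ via a Morrey-space Liouville statement for harmonic functions.

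\textbf{Main obstacle.} The delicate step is the pressure: since $\rho^\gamma\in L^\infty$ has no useful Morrey integrability on its own, it must be rewritten through Lemma \ref{Lem3-1} as a singular integral of the quantities we \emph{do} control, namely $\rho\U\otimes\U$ and $\B\otimes\B$. In parallel, the magnetic quadratic factors must be converted to $|\nabla\B|^2$ by a Poincar\'e-on-annulus argument so that the sole hypotheses on $\B$ invoked are the stated $\nabla\B\in M^3_\gamma$ ones. Balancing the $R^{-1}$ arising from cut-off derivatives against the Morrey index $\gamma$ is the combinatorial core of the proof.
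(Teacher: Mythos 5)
Your proposal follows the same core route as the paper: the Caccioppoli-type inequality \eqref{3-8-1} obtained by testing the momentum and induction equations with $\U\varphi_R$ and $\B\varphi_R$, the pressure handled through the harmonic decomposition of Lemma \ref{Lem3-1}, and the annular remainders then rescaled in the Morrey normalization so that the quadratic terms carry a factor $R^{\frac{2}{3}\gamma-1}\to 0$ and the cubic terms carry the factor $R^{\gamma-1}\big(R^{-\gamma}\int_{\mathcal{C}(R/2,R)}\vert\U\vert^3 dx\big)^{1/3}$, which is killed by the $M^{3}_{\gamma,0}$ hypothesis (resp.\ the extra decay condition in part $2$). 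You depart from the paper in two places. First, you keep the magnetic contributions $-\B\otimes\B$ and $\nabla\vert\B\vert^2/2$ inside the pressure decomposition, whereas \eqref{pressure-decom} records only $p_1=(-\Delta)^{-1}\partial_i\partial_j(\rho u_iu_j)$; your version is the more careful one. Second, and more importantly, you insert a Poincar\'e-on-annulus step to trade $\Vert\B\Vert_{L^3(\mathcal{C}(R/2,R))}$ for $R\Vert\nabla\B\Vert_{L^3(\mathcal{C}(R/2,R))}$, which is genuinely needed because the theorem assumes only $\nabla\B\in M^{3}_{\gamma}$; the paper skips this reconciliation entirely and simply writes $\Vert\B\Vert_{M^{3}_{\gamma}}$ in \eqref{Lm-20}, i.e.\ it silently strengthens the hypothesis to $\B\in M^{3}_{\gamma}$.

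That Poincar\'e step, however, is not yet a proof as you state it. The annular remainders are quadratic in $\B$ itself, e.g.\ $\int_{\mathcal{C}(R/2,R)}\vert\B\vert^2\,\nabla\varphi_R\cdot\rho\U\,dx$ and $\int_{\mathcal{C}(R/2,R)}\vert\B\vert^2\,\Delta\varphi_R\,dx$, so after writing $\B=c_R+\widetilde{\B}$ with $c_R$ the annular mean you are left with $|c_R|^2\int\nabla\varphi_R\cdot\rho\U\,dx=-|c_R|^2\int\varphi_R\operatorname{div}(\rho\U)\,dx$ (which does vanish here by the continuity equation, but the analogous constant term against $\Delta\varphi_R$ vanishes only because $\int\Delta\varphi_R\,dx=0$, and the cross terms $c_R\cdot\int\widetilde{\B}(\cdots)\,dx$ remain), and the hypothesis $\nabla\B\in M^{3}_{\gamma}$ gives no bound on $|c_R|$: an $L^3$ gradient in dimension three is critical for $L^\infty$, and here one only has annular $L^3$ control of size $R^{\gamma/3}$, so $|c_R|$ could grow. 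Hence ``the additive constant is harmless'' requires an actual argument, and this is the one genuine gap in your write-up. It is, to be fair, a gap the paper's own proof shares in worse form, since the paper never reconciles its use of $\Vert\B\Vert_{M^{3}_{\gamma}}$ with the stated hypothesis on $\nabla\B$; the same remark applies to the final step of part $2$, where neither you nor the paper explains why a constant $\B$ compatible with all the hypotheses must actually be zero.
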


When $\rho=constant$ , the system of \eqref{MHD} reduce as follows:
\begin{equation}\label{MHD-incompressible}
    \begin{cases}
    - \nu\Delta \mathbf{u}+\operatorname{div}(\mathbf{u}\otimes \mathbf{u})+\nabla \pi =0, \\
   - \Delta \mathbf{b}+\operatorname{div}( \mathbf{u}\otimes \mathbf{b})-\operatorname{div}(\mathbf{b}\otimes \mathbf{u})=0,\\
   \operatorname{div}\mathbf{u}=0=\operatorname{div}\mathbf{b},
    \end{cases}
\end{equation}
where $\pi$ is a total pressure.

For the incompressible 3D MHD equations \eqref{MHD-incompressible},
 Schulz \cite{Schulz19} proved a Liouville theorem  without the finite Dirichlet integral, which is if the smooth
solution
\[
(u, b) \in (L^p\cap BMO^{-1})(\R^3),\quad p \in (2, 6],
\] then
$u=b =0$. Recently, Yuan and Xiao \cite{Yuan20} shown if
\[
(u, b) \in L^p(\R^3),\quad 2\leq p \leq 9/2,
\]
then $u =b =0$. For a interesting results, we also refer to
\cite{LZ18} in spite of results in \cite{KNSS09} and \cite{LLN20}
inspired by the work in \cite{CJL-R} in the Lorentz framework.

In this directions, two results are stated in the framework of
Lebesgue and Lorentz spaces.

\begin{thm}\label{Theo:Lorentz} Let $\U, \B \in L^{2}_{loc}(\R^3)$ be a weak solution of \eqref{MHD-incompressible}.
\begin{enumerate}
\item[1)] If $\U, \B \in L^{9/2, q}(\R^3)$, with $9/2 \leq q <+\infty$, then we have $\U=0=\B$.
\item[2)] If $\U \in L^{r,q}$ with $9/2<r\leq q <+\infty$ and $\B \in L^{9/2, q}(\R^3)$, with $9/2 \leq q <+\infty$, and in addition, if
    \begin{equation*}
    \sup_{R>1} R^{2-\frac{9}{r}} \Vert \U \Vert_{L^{r,q}(\mathcal{C}(R/4, 2R))} <+\infty,
    \end{equation*}
     then we have $\U=0=\B$.
\end{enumerate}
\end{thm}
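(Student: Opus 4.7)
The plan is to adapt the local energy (Caccioppoli–type) strategy developed for the incompressible Navier–Stokes Liouville problem in \cite{CJL-R,KNSS09,LLN20} to the MHD system \eqref{MHD-incompressible}. Fix a radial cutoff $\phi_R\in C_c^\infty(\R^3)$ with $\phi_R\equiv 1$ on $B_R$, $\mathrm{supp}\,\phi_R\subset B_{2R}$, and $|\nabla^k\phi_R|\lesssim R^{-k}$. Test the momentum equation against $\U\phi_R^2$ and the induction equation against $\B\phi_R^2$, and add them. Using $\dive\U=\dive\B=0$ I integrate by parts to move derivatives off each cubic nonlinearity and onto $\phi_R$; the crucial cancellation, exactly as in the classical MHD energy identity, is that the two quadratic-in-$\B$ contributions coming from the Lorentz force in the momentum equation and from $(\B\cdot\nabla)\U\cdot\B$ in the induction equation combine, via $\int\B\cdot\nabla(\U\cdot\B)\phi_R^2=-2\int(\U\cdot\B)(\B\cdot\nabla\phi_R)\phi_R$, into a purely annulus-supported remainder. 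One thus reaches an inequality of the schematic form
\begin{equation*}
\nu\int|\nabla\U|^2\phi_R^2 + \int|\nabla\B|^2\phi_R^2 \lesssim \int_{\mathcal{C}(R,2R)}\Big[(|\U|^2+|\B|^2)|\nabla^2\phi_R^2| + (|\U|^3+|\U||\B|^2+|\pi||\U|)|\nabla\phi_R|\Big].
\end{equation*}

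To handle the pressure I take the divergence of the momentum equation, obtaining $-\Delta\pi=\partial_i\partial_j(u_iu_j-b_ib_j)$; the Calderón–Zygmund boundedness of Riesz transforms on Lorentz spaces then yields $\pi\in L^{r/2,q/2}(\R^3)$ (modulo a harmonic function forced to be constant by the Lorentz growth), with $r=9/2$ in case (1) and $r>9/2$ in case (2). Each right-hand term is estimated by Lorentz–Hölder inequalities so that the cubic and pressure integrals factor into powers of $\|\U\|_{L^{r,q}(\mathcal{C}(R,2R))}$ and $\|\B\|_{L^{9/2,q}(\mathcal{C}(R,2R))}$ multiplied by norms of $\nabla\phi_R$ that remain uniformly $O(1)$ in $R$ thanks to the sharp matching of exponents (for instance $|\U|^3\in L^{3/2,q/3}$ pairs with $|\nabla\phi_R|\in L^3$, and analogously for the pressure and mixed terms). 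In case (1) these Lorentz norms on the annulus tend to $0$ as $R\to\infty$ by absolute continuity of the $L^{9/2,q}$-norm for $q<\infty$, and the conclusion follows. In case (2) the cubic velocity term acquires an extra scaling factor $R^{2-9/r}$ that is exactly compensated by the supplementary hypothesis $\sup_{R>1}R^{2-9/r}\|\U\|_{L^{r,q}(\mathcal{C}(R/4,2R))}<\infty$; the $\B$-block and cross terms still enjoy the critical exponent $9/2$ and its built-in annular vanishing.

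Passing to the limit $R\to\infty$ forces $\int_{\R^3}(|\nabla\U|^2+|\nabla\B|^2)=0$, so $\U$ and $\B$ are constant, and finite-exponent Lorentz integrability makes both constants zero. In my view the main obstacle is the cross term $\int(\B\cdot\nabla)\U\cdot\B\,\phi_R^2$: a naive Young inequality leaves a $\int|\nabla\U|^2\phi_R^2$ on the right that cannot be absorbed into the dissipation without losing the key decay. The fix is the integration by parts exploiting $\dive\B=0$ that pairs this term with the Lorentz-force contribution of the momentum equation, so that the derivative falls on $\phi_R$ rather than on $\U$ or $\B$. A secondary delicate point is the scaling bookkeeping in case (2), where the widened annulus $\mathcal{C}(R/4,2R)$ in the hypothesis is needed precisely to cover the support of $\nabla\phi_R$ after Hölder with both the pressure and the cubic velocity nonlinearity.
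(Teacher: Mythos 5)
Your proposal is correct and follows the same overall route as the paper: a Caccioppoli-type inequality obtained by testing with cutoffs, Lorentz--H\"older estimates on annuli, Riesz-transform control of the pressure, and the vanishing of $L^{p,q}$-norms on annuli $\mathcal{C}(R,2R)$ as $R\to\infty$ when the second index $q$ is finite. There are two genuine differences in execution. First, for the dissipative term you integrate by parts twice, so that only $\int_{\mathcal{C}}(|\U|^2+|\B|^2)|\Delta\phi_R^2|$ survives; the paper instead integrates by parts once, keeps annular terms of the form $\bigl(\int_{\mathcal{C}(R/2,R)}|\nabla\U|^{p/2}\bigr)^{2/p}$ in its Lemma~\ref{Prop-Base}, and must then separately bound $\int_{B_R}|\nabla\U|^{p/2}$ and $\int_{B_R}|\nabla\B|^{p/2}$ through the Riesz-transform representation $\partial_i\U=\sum_j\P\mathcal{R}_i\mathcal{R}_j(u_j\U-B_j\B)$ (inequalities \eqref{estim-Lorentz-2}--\eqref{estim-Lorentz-3}). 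Your variant buys a shorter argument that never needs $\nabla\U\in L^{p/2}_{loc}$ as a hypothesis; the paper's variant is what forces the auxiliary cutoff $\theta_R$ supported in $\mathcal{C}(R/4,2R)$ and explains why the widened annulus appears in the hypothesis of part (2). Second, you derive the pressure directly from $-\Delta\pi=\partial_i\partial_j(u_iu_j-b_ib_j)$, which is the natural choice for \eqref{MHD-incompressible}, whereas the paper's Lemma~\ref{Prop-Base} recycles the compressible decomposition \eqref{pressure-decom}; your treatment is cleaner and is what the cited references actually do. One small point you should make explicit in part (2): the hypothesis $\sup_{R>1}R^{2-9/r}\Vert\U\Vert_{L^{r,q}(\mathcal{C}(R/4,2R))}<+\infty$ only gives \emph{boundedness} of one factor in the cubic velocity term $R^{2-9/r}\Vert\U\Vert_{L^{r,q}(\mathcal{C})}^3$; the convergence to zero comes from the remaining factors $\Vert\U\Vert_{L^{r,q}(\mathcal{C})}^2\to 0$, which again uses $q<+\infty$. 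Your phrase ``exactly compensated'' glosses over this, but the surrounding bookkeeping is consistent with it, so this is an expository rather than a mathematical gap.
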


Investigating solutions of the equations \eqref{MHD} that may decay
to zero in different rates as $|x| \rightarrow \infty$ with respect
to different directions, we introduce a new functional space
\cite{Phan20}. For two given numbers $q, r \in (1, \infty)$ and $f :
\mathbb{R}^3 \rightarrow \mathbb{R}$ is a measurable function,
\emph{the mixed-norm Lebesgue space} $L_{q,r}(\R^3)$  is the space
that is equipped with the following norm
\[
\|f\|_{L_{q,r}(\R^3)} = \left [\int_{\R} \left (\int_{\R^2} |f(x_1,
x_2, x_3)|^q dx_1 dx_2 \right)^{r/q} dx_3 \right]^{1/r}.
\]

\begin{thm} \label{mixed-norm} Let $q, r \in [3, \infty)$ be two numbers satisfying
\begin{equation*} \label{q-r-relation}
\frac{2}{q} + \frac{1}{r} \geq \frac{2}{3}.
\end{equation*}
and  $(\U, \B) \in H^1_{\textup{loc}}(\R^3)$ be a weak solution of
\eqref{MHD-incompressible} and assume that $(\U, \B) \in
L_{q,r}(\R^3)$. Then $\U \equiv 0\equiv \B$ in $\R^3$.
\end{thm}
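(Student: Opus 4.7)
The plan is to extend the mixed-norm Caccioppoli argument of Phan~\cite{Phan20} for the stationary Navier--Stokes equations to the coupled MHD system~\eqref{MHD-incompressible}. The scaling hypothesis $\frac{2}{q}+\frac{1}{r}\ge\frac{2}{3}$ is the threshold that will force every boundary integral produced by the cutoff to vanish as the cutoff size tends to infinity.

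I would fix $\psi\in C_c^\infty(\R)$ with $\psi\equiv 1$ on $[-1,1]$ and $\Supp\psi\subset[-2,2]$ and work with the anisotropic tensor-product cutoff $\varphi_R(x)=\psi(x_1/R)\psi(x_2/R)\psi(x_3/R)$; a direct computation yields $\|\nabla\varphi_R\|_{L_{a,b}(\R^3)}\lesssim R^{2/a+1/b-1}$ for all $a,b\in(1,\infty)$. Testing the momentum equation of~\eqref{MHD-incompressible} against $\U\varphi_R^2$ and the induction equation against $\B\varphi_R^2$, integrating by parts, using $\dive\U=\dive\B=0$, and adding the two identities, one obtains a Caccioppoli-type inequality. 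Here the coupling contribution $\int B_iB_j\,\partial_jU_i\,\varphi_R^2$ arising from the Lorentz force in the momentum equation cancels the corresponding term coming from the stretching piece of the induction equation, so that after absorbing the gradient--gradient cross terms via Cauchy--Schwarz the left-hand side is $\nu\int|\nabla\U|^2\varphi_R^2+\int|\nabla\B|^2\varphi_R^2$ and the right-hand side is controlled by $\int(|\U|^3+|\U||\B|^2+|\pi||\U|)\,|\nabla\varphi_R|\,dx+\int(|\U|^2+|\B|^2)\,|\nabla\varphi_R|^2\,dx$.

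Taking the divergence of the momentum equation together with $\dive\U=\dive\B=0$ yields $\pi=\sum_{i,j}\cR_i\cR_j(U_iU_j-B_iB_j)$, where $\cR_i$ are the Riesz transforms, so boundedness of Riesz transforms on mixed-norm Lebesgue spaces (valid for $q,r\in(2,\infty)$) gives $\|\pi\|_{L_{q/2,r/2}}\lesssim\|\U\|_{L_{q,r}}^2+\|\B\|_{L_{q,r}}^2$. Each boundary integral is then estimated by the mixed-norm Hölder inequality. For the cubic pairings with $|\nabla\varphi_R|$, placing the first factor in $L_{q/3,r/3}$ and the cutoff derivative in the conjugate $L_{(q/3)',(r/3)'}$ produces a cutoff factor of order $R^{3-6/q-3/r}$, which is non-positive exactly under $\frac{2}{q}+\frac{1}{r}\ge\frac{2}{3}$. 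The quadratic pairings with $|\nabla\varphi_R|^2$ only demand the weaker scaling $\frac{2}{q}+\frac{1}{r}\ge\frac{1}{2}$, which is automatic. Since $\U,\B\in L_{q,r}(\R^3)$, dominated convergence yields $\|\U\|_{L_{q,r}(\Supp\nabla\varphi_R)}+\|\B\|_{L_{q,r}(\Supp\nabla\varphi_R)}\to 0$ as $R\to\infty$; combined with the bounded cutoff factor this drives every boundary integral to zero.

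Passing to the limit $R\to\infty$ then forces $\int_{\R^3}(\nu|\nabla\U|^2+|\nabla\B|^2)\,dx=0$, so $\U$ and $\B$ are constants, and since $q,r<\infty$ the only constant lying in $L_{q,r}(\R^3)$ is zero, yielding $\U\equiv 0\equiv\B$. The main technical obstacle is to isolate the magnetic cancellation precisely enough that the surviving boundary terms remain within the Hölder scaling allowed by $\frac{2}{q}+\frac{1}{r}\ge\frac{2}{3}$; once the cancellation is secured, the additional MHD coupling relative to~\cite{Phan20} does not degrade the scaling analysis.
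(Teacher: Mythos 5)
Your proposal follows essentially the same route as the paper: a tensor-product cutoff Caccioppoli identity for the coupled system (with the magnetic coupling terms combining into the boundary term $\int (\U\cdot\B)(\B\cdot\nabla\varphi_R)$, which you correctly retain as $|\U||\B|^2|\nabla\varphi_R|$), the Riesz-transform representation of the pressure bounded on mixed-norm spaces (the paper's Lemma~\ref{pressure-lemma} is the Muckenhoupt-weighted version, of which your unweighted estimate is the $\omega\equiv 1$ case), and mixed-norm H\"older estimates on the cutoff terms --- precisely the computation the paper itself delegates to \cite{Phan20}. One arithmetic slip: by your own formula $\Vert \nabla \varphi_R\Vert_{L_{a,b}}\lesssim R^{2/a+1/b-1}$ with $a=(q/3)'$, $b=(r/3)'$, the cubic-term cutoff factor is of order $R^{2-\frac{6}{q}-\frac{3}{r}}$, not $R^{3-\frac{6}{q}-\frac{3}{r}}$; the former exponent is the one that is nonpositive exactly when $\frac{2}{q}+\frac{1}{r}\geq\frac{2}{3}$, so the equivalence you assert is the correct one and the argument stands.
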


\begin{rmk}
The result given in \ref{Theo:Lorentz} is of particular interest
since this result can be regarded as a improvement  of the results
given in \cite{KzonoAL} and \cite{SerWang}.
\end{rmk}

\begin{rmk}
In case of the compressible fluids, we don't know if Theorem
  \ref{Theo:Lorentz} or \ref{mixed-norm} are true yet. For a proof of Theorem
  \ref{Theo:Lorentz}, because it is necessary to get the estimate of $\|\nabla
  u\|_{L^p}$. More speaking, to control a estimate of $\|\nabla
  u\|_{L^p}$, it needs a higher regularity of $\rho$, in succession, which
  yields that it needs a higher regularity of $u$. On the other
  hands, for a proof of Theorem, because it is necessary to use Lemma \ref{mixed-norm} based on the weight function.
\end{rmk}

Lastly, we make a few comments.

\begin{rmk}
The present paper focuses on Liouville theorem for the compressible
MHD equation, not Hall-MHD equation.  We think it is secondary to
consider the Hall term in the system \eqref{MHD}. Considering the
Hall term, $\mbox{curl}\times (\text{curl}\mathbf{b}\times
\mathbf{b})$ in \eqref{MHD}, namely, (compressible) Hall-MHD
equations, we can see that additional conditions are required for
the magnetic filed $b$ in light of the references \cite{Zeng18},
\cite{Yuan20} and \cite{LN20}. Proofs of this part is not difficult,
and thus leave it to the readers. For this equation, we refer to
\cite{CW15} and \cite{ZZ20} for interested readers.
\end{rmk}

\begin{rmk}
For a various coupled equation with (compressible) Naiver-Stokes
equations, the above Theorems are applicable through Cacciopoly type
estimate.
\end{rmk}


\renewcommand{\theequation}{\thesection.\arabic{equation}}
\setcounter{equation}{0} 

\section{Proof of Theorem \ref{thm-1}}

The following inequalities in Lorentz spaces are useful.

\begin{lem}[\cite{O1963}]\label{Lem2-1}
\label{lem} Let $f\in L^{p_1,q_1}(\mathbb{R}^3)$ and $g\in
L^{p_2,q_2}(\mathbb{R}^3)$ with $1\leq p_1, p_2\leq\infty$, $1\leq
q_1, q_2\leq\infty.$ Then $fg\in L^{p,q}(\mathbb{R}^3)$ with
$\frac{1}{p}=\frac{1}{p_1}+\frac{1}{p_2}$,
$\frac{1}{q}\leq\frac{1}{q_1}+\frac{1}{q_2}$ and
\begin{equation*}\label{2-1}
     \begin{split}
   \|fg\|_{L^{p,q}(\mathbb{R}^3)}\leq C\|f\|_{L^{p_1,q_1}(\mathbb{R}^3)}\|g\|_{L^{p_2,q_2}(\mathbb{R}^3)}
     \end{split}
\end{equation*}
for a constant $C>0.$
\end{lem}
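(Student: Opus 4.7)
The plan is to reduce the inequality to the nonincreasing rearrangements $f^{*},g^{*}:(0,\infty)\to[0,\infty)$ of $f,g$ and then to a Hölder inequality in the single variable $t$. Recall the equivalent representation of the Lorentz quasi-norm, namely $\|h\|_{L^{p,q}}\sim\bigl(\int_{0}^{\infty}(t^{1/p}h^{*}(t))^{q}\,dt/t\bigr)^{1/q}$ for $q<\infty$, and $\sup_{t>0}t^{1/p}h^{*}(t)$ for $q=\infty$. This reduction is legitimate on $\mathbb{R}^{3}$ since rearrangements preserve distribution functions.

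The first key step is the pointwise rearrangement bound $(fg)^{*}(2t)\le f^{*}(t)\,g^{*}(t)$ for every $t>0$. I would derive it from the inclusion $\{x:|f(x)g(x)|>f^{*}(t)g^{*}(t)\}\subset\{|f|>f^{*}(t)\}\cup\{|g|>g^{*}(t)\}$, whose Lebesgue measure is at most $2t$ by definition of $f^{*}$ and $g^{*}$, so that the distribution function of $fg$ at height $f^{*}(t)g^{*}(t)$ does not exceed $2t$.

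Next, setting $1/q_{0}=1/q_{1}+1/q_{2}$ (so that $q_{0}\le q$ by the hypothesis $1/q\le 1/q_{1}+1/q_{2}$), I would substitute the pointwise bound into the Lorentz integral for $\|fg\|_{L^{p,q_{0}}}$, change variables $t\mapsto 2t$, and then apply Hölder's inequality in $t$ with exponents $q_{1}/q_{0}$ and $q_{2}/q_{0}$. The identity $1/p=1/p_{1}+1/p_{2}$ makes the weights compatible and produces $\|fg\|_{L^{p,q_{0}}}\le C\|f\|_{L^{p_{1},q_{1}}}\|g\|_{L^{p_{2},q_{2}}}$. The passage to general $q\ge q_{0}$ is then carried out via the standard continuous embedding $L^{p,q_{0}}(\mathbb{R}^{3})\hookrightarrow L^{p,q}(\mathbb{R}^{3})$, which already appears in \eqref{relarion-ll} in the paper.

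The main obstacle I anticipate is the bookkeeping of endpoint cases rather than the main calculation. When $q=\infty$ or some $q_{i}=\infty$, the integral Hölder step is replaced by the supremum identity $(2t)^{1/p}(fg)^{*}(2t)\le 2^{1/p}\bigl(t^{1/p_{1}}f^{*}(t)\bigr)\bigl(t^{1/p_{2}}g^{*}(t)\bigr)$, which yields the weak-type estimate directly from the pointwise bound; when some $p_{i}=\infty$, the Lorentz quasi-norm collapses to the $L^{\infty}$ norm and the inequality becomes trivial. If a sharper constant were required, one could instead use the O'Neil averaged bound $(fg)^{**}(t)\le f^{**}(t)g^{**}(t)$ together with Hardy's inequality, but since the statement asks only for a finite constant $C$, the elementary rearrangement inequality above is sufficient.
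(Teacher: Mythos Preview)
The paper does not supply its own proof of this lemma; it is quoted as a known result from O'Neil~\cite{O1963} and used as a black box in the estimates for Theorem~\ref{thm-1}. There is therefore nothing in the paper to compare your argument against.

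That said, your proposal is correct and is essentially the classical proof. The pointwise rearrangement bound $(fg)^{*}(2t)\le f^{*}(t)\,g^{*}(t)$, obtained from the set inclusion you describe, is standard; the subsequent H\"older step in the variable $t$ with exponents $q_{1}/q_{0}$ and $q_{2}/q_{0}$ uses exactly the relation $1/p=1/p_{1}+1/p_{2}$ to split the weight, and the passage from $q_{0}$ to $q\ge q_{0}$ via the embedding \eqref{relarion-ll} is legitimate. Your handling of the endpoint cases $q_{i}=\infty$ and $p_{i}=\infty$ is also correct. This is in fact the route taken in O'Neil's original paper, where the averaged inequality $(fg)^{**}(t)\le f^{**}(t)\,g^{**}(t)$ that you mention as an alternative is used to obtain the sharp constant; for the purposes of the present paper only a finite constant is needed, so your more elementary version suffices.
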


\begin{lem}[\cite{B1976}]\label{Lem2-2}
Let $\Omega$ be a bounded domain in $\mathbb{R}^n$, $1<p<\infty$,
$1<q\leq\infty$ and $f\in L^{p, q}(\Omega)$. Then
\begin{equation*}\label{2-2}
     \begin{split}
   \|\nabla^2(-\Delta)^{-1}f\|_{L^{p,q}(\Omega)}\leq C\|f\|_{L^{p,q}(\Omega)},
     \end{split}
\end{equation*}
where the constant $C>0$ is independent of $\Omega$.
\end{lem}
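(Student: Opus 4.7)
The plan is to reduce the estimate on the bounded domain $\Omega$ to the classical Calder\'on--Zygmund bound on $\R^n$ combined with a real interpolation argument, transferring the result back to $\Omega$ by zero extension.

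First, I would extend $f \in L^{p,q}(\Omega)$ to a function $\tilde f$ on $\R^n$ by setting $\tilde f = 0$ on $\R^n \setminus \Omega$. Since the distribution function is preserved, $\|\tilde f\|_{L^{p,q}(\R^n)} = \|f\|_{L^{p,q}(\Omega)}$, with no $\Omega$-dependent constant. I then interpret $(-\Delta)^{-1}\tilde f$ as the convolution of $\tilde f$ with the Newtonian potential $\Phi$ on $\R^n$. The operator $T := \nabla^2(-\Delta)^{-1}$ then acts as convolution with the kernel $K(x) = \nabla^2 \Phi(x)$, which is homogeneous of degree $-n$, smooth off the origin, and has vanishing mean on spheres in the principal-value sense; in particular, $T$ is a Calder\'on--Zygmund operator of convolution type.

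Next, I invoke the classical Calder\'on--Zygmund theorem to obtain the strong bound $T : L^r(\R^n) \to L^r(\R^n)$ for every $1 < r < \infty$, with a constant depending only on $r$ and $n$. Fix $1 < p < \infty$, choose two exponents $1 < p_0 < p < p_1 < \infty$, and apply the real interpolation identity
\[
(L^{p_0}(\R^n), L^{p_1}(\R^n))_{\theta, q} = L^{p,q}(\R^n), \qquad \tfrac{1}{p} = \tfrac{1-\theta}{p_0} + \tfrac{\theta}{p_1}, \quad 1 \leq q \leq \infty,
\]
valid in the sense of the $K$-method. Since $T$ is bounded on both endpoint spaces, functoriality of the real method yields
\[
\|T g\|_{L^{p,q}(\R^n)} \leq C \|g\|_{L^{p,q}(\R^n)}, \qquad 1 < q \leq \infty,
\]
with $C = C(n,p,q)$, independent of $g$ and of any subdomain of $\R^n$.

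Finally, restricting back to $\Omega$ and using that restriction does not increase the Lorentz norm, I conclude
\[
\|\nabla^2(-\Delta)^{-1} f\|_{L^{p,q}(\Omega)} = \|(T\tilde f)|_\Omega\|_{L^{p,q}(\Omega)} \leq \|T\tilde f\|_{L^{p,q}(\R^n)} \leq C\|\tilde f\|_{L^{p,q}(\R^n)} = C\|f\|_{L^{p,q}(\Omega)},
\]
with $C$ independent of $\Omega$. The main subtlety is fixing the correct interpretation of $(-\Delta)^{-1}$ on the bounded domain: one must use the whole-space Newtonian inverse applied to the zero extension of $f$, so that $\Omega$ enters only through restriction, rather than a Dirichlet or Neumann inverse, which would force an $\Omega$-dependent constant through a Poincar\'e-type step. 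Once this identification is made, the remainder is standard singular-integral theory in Lorentz spaces via real interpolation.
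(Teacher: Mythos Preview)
The paper does not give its own proof of this lemma: it is stated with a citation to Bergh--L\"ofstr\"om's interpolation monograph and then used as a black box. Your argument---zero extension to $\R^n$, Calder\'on--Zygmund boundedness on $L^{p_0}$ and $L^{p_1}$, then real interpolation $(L^{p_0},L^{p_1})_{\theta,q}=L^{p,q}$, followed by restriction---is precisely the standard route one finds in that reference, so your proposal is correct and matches the intended source. Your explicit remark that $(-\Delta)^{-1}$ must be read as the whole-space Newtonian potential on the zero extension (rather than a Dirichlet inverse on $\Omega$) is exactly the point that makes the constant $\Omega$-independent, and it is the only place where care is needed.
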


To obtain the estimate for the pressure term, we recall the
following lemma from \cite{L2014}.
\begin{lem}\label{Lem3-1}
\label{lem} Let $P\in L^{\infty}(\mathbb{R}^3),$ $p_1\in
L^{r_1}(\mathbb{R}^3),$ $p_2\in L^{r_2}(\mathbb{R}^3)$ with $1\leq
r_1, r_2<\infty.$ Suppose that $P-p_1-p_2$ is weakly harmonic, that
is
$$\Delta (P-p_1-p_2)=0$$
in the sense of distribution, then there exists a constant $c$ such
that
\begin{equation*}
     \begin{split}
      P-p_1-p_2 = c \quad\, \mbox{a.e.}\quad\, x\in \mathbb{R}^3
      \end{split}
\end{equation*}
If furthermore $P(x)\geq 0$ a.e., then we also have $c\geq 0$.
\end{lem}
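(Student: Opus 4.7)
The plan is to observe that $h := P - p_1 - p_2$ is a harmonic function on $\R^3$ which, up to terms that decay at infinity in an $L^{r_i}$-averaged sense, agrees with the bounded function $P$. The strategy is first to show that $h$ is globally bounded, and then to apply the classical Liouville theorem for harmonic functions in $\R^3$.

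To prove boundedness of $h$, I would use elliptic regularity (so that $h$ is smooth) together with the mean value property: for every $x \in \R^3$ and every $R > 0$,
$$|h(x)| = \left| \frac{1}{|B(x,R)|} \int_{B(x,R)} h(y)\, dy \right| \leq \|P\|_{L^\infty} + \frac{1}{|B(x,R)|} \int_{B(x,R)} \bigl(|p_1(y)| + |p_2(y)|\bigr) dy.$$
Applying Hölder's inequality with exponents $r_i$ and $r_i/(r_i-1)$ (the case $r_i=1$ being immediate) gives
$$\frac{1}{|B(x,R)|} \int_{B(x,R)} |p_i(y)|\, dy \leq C\, R^{-3/r_i} \|p_i\|_{L^{r_i}(\R^3)}, \qquad i=1,2.$$
Since $r_i < \infty$ the exponent $-3/r_i$ is strictly negative, so letting $R \to \infty$ yields $|h(x)| \leq \|P\|_{L^\infty}$ for every $x \in \R^3$. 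Hence $h$ is a bounded harmonic function on $\R^3$, and the classical Liouville theorem forces $h \equiv c$ for some constant $c \in \R$.

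For the second assertion, assume $P \geq 0$ a.e. Integrating the identity $c = P - p_1 - p_2$ over $B(0,R)$, dividing by $|B(0,R)|$, and using $P \geq 0$ to discard its nonnegative contribution gives
$$c \geq -C\bigl( R^{-3/r_1}\|p_1\|_{L^{r_1}} + R^{-3/r_2}\|p_2\|_{L^{r_2}} \bigr),$$
and sending $R \to \infty$ yields $c \geq 0$. There is essentially no obstacle in this argument; the only point requiring care is the strict finiteness of each $r_i$, which is exactly what makes the ball averages of the $p_i$ vanish in the limit and reduces the global control of $h$ to that of the $L^\infty$ piece $P$.
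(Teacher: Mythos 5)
Your argument is correct and complete: Weyl's lemma gives smoothness, the mean value property combined with H\"older's inequality (using $r_i<\infty$ so that the ball averages of $p_1,p_2$ vanish as $R\to\infty$) shows $|h|\leq \|P\|_{L^\infty}$, and the classical Liouville theorem together with the averaging argument for the sign of $c$ finishes the proof. The paper itself does not prove this lemma but cites it from Li--Yu \cite{L2014}, and your proof is essentially the standard argument given there.
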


Taking the div operation on both sides of $\eqref{MHD}_2$, we have
$
   \Delta (P-p_1-p_2)=0
$, where
\begin{equation}\label{pressure-decom}
   p_1:=(-\Delta)^{-1}\partial_i\partial_j (\rho u_iu_j)\quad
   \mbox{and}\quad p_2:=(\lambda+2\nu)\operatorname{div}\mathbf{u}.
\end{equation}
Using the assumption $\nabla \mathbf{u}\in L^2{(\mathbb{R}^3)}$ and
the Sobolev embedding $\dot{H}^1(\mathbb{R}^3)\hookrightarrow
L^6(\mathbb{R}^3)$, it follows
$$p_1\in L^3(\mathbb{R}^3)\ , \ p_2\in L^2(\mathbb{R}^3).$$
Due to Lemma \ref{Lem3-1}, there exists a constant $c\geq 0$ such
that
$$ a\rho^\gamma=P=c+p_1+p_2.$$
Considering the function
$P_1:=\rho^{\gamma-1}-(\frac{c}{a})^{\frac{\gamma-1}{\gamma}}=(\frac{c+p_1+p_2}{a})^{\frac{\gamma-1}{\gamma}}-(\frac{c}{a})^{\frac{\gamma-1}{\gamma}}$,
we have
\begin{equation}\label{pressure-relation}
\nabla P=\nabla(a\rho
^\gamma)=\frac{a\gamma}{\gamma-1}\rho\nabla(\rho^{\gamma-1})=\frac{a\gamma}{\gamma-1}\rho\nabla
P_1
\end{equation}
 and
\begin{equation}\label{3-6a}
     \begin{split}
  |P_1\rho|\leq (|p_1|+|p_2|).
     \end{split}
\end{equation}

\begin{proof} [Proof of Theorem \ref{thm-1}] Following the same arguments in  \cite{LN20} or \cite{L2014}, we are aimed to show a suitable Caccioppoli-type inequality.
For the convenience of our readers, we give a sketch of the proof.
Let $\varphi \in C_0^\infty (\mathbb{R}^3)$ be a radial cut-off
function satisfying
\begin{equation*}\label{3-1}
\varphi(|x|)=
    \begin{cases}
        1, \quad \mathrm{if}\,  |x|< \frac{1}{2}, \\
        0, \quad \mathrm{if}\,  |x|> 1,
    \end{cases}
\end{equation*}
and $0 \leq \varphi (|x|) \leq 1 $ for $\frac{1}{2}\leq |x|\leq 1.$
For each given $R>0$, we define
$\varphi_R(x):=\varphi(\frac{|x|}{R})$ satisfying
$$\|\nabla^k \varphi_R\|_{L^\infty} \leqslant CR^{-k}, \quad k=0, 1, 2$$
for some constant $C>0$ independent of $x\in \mathbb{R}^3$.

Taking the inner product of  $\eqref{MHD}_2$ with
$\mathbf{u}\varphi_R$ and $\eqref{MHD}_3$ with $\mathbf{b}\varphi_R$
and after integrating by parts, it follows that
\begin{equation}\label{3-3-1}
     \begin{split}
   &\int_{B_{\frac{R}{2}}}\Big(\nu|\nabla \mathbf{u}|^2+|\nabla \mathbf{b}|^2\Big) \varphi_R\, dx + (\lambda+\nu)\int_{\frac{R}{2}}|\operatorname{div} \mathbf{u}|^2\, dx\\
   &=-\nu\int_{\mathcal{C}(R/2,R)}(|\mathbf{u}|^2+|\mathbf{b}|^2) \Delta \varphi_R \, dx
   -2(\lambda+\nu)\int_{\mathcal{C}(R/2,R)} \operatorname{div} \mathbf{u} \mathbf{u}\cdot \nabla \varphi_R \, dx\\
   &\quad-\int_{\mathcal{C}(R/2,R)}|\mathbf{u}|^2 \nabla \varphi_R\cdot\rho \mathbf{u} \, dx +\int_{\mathcal{C}(R/2,R)}|\mathbf{b}|^2 \nabla \varphi_R\cdot \mathbf{u} \, dx
   - \int_{\mathcal{C}(R/2,R)}\nabla P\cdot \mathbf{u}\varphi_R\,
dx:=\sum_{i=1}^6
   \mathcal{I}_i,
     \end{split}
\end{equation}
where we use $\operatorname{div}(\rho \mathbf{u})=0$ and by the
integration by parts and chain rule,
\begin{equation*}
     \begin{split}
  &-2\nu\int_{\mathbb{R}^3} \nabla \mathbf{u} : \mathbf{u}\otimes \nabla \varphi_R \, dx=\nu\int_{\mathbb{R}^n}|\mathbf{u}|^2 \Delta (\varphi_R) \,
   dx.
     \end{split}
\end{equation*}
Now, we estimate $ \mathcal{I}_i$ term by term. we assume $ p > 3$,
$3\leq q\leq \infty$ or $p=q=3$. For $ \mathcal{I}_1$, since
\begin{equation*}\label{3-4}
     \begin{split}
   \int_{\mathcal{C}(R/2,R)}|\mathbf{u}|^2 |\Delta \varphi_R| \, dx
   \lesssim  R^{1-\frac{6}{p}}\|\mathbf{u}\|^2_{L^{p,
q}(\mathcal{C}(R/2,R))},
     \end{split}
\end{equation*}
we have
\begin{equation*}\label{3-4}
     \begin{split}
   | \mathcal{I}_1|\lesssim  R^{1-\frac{6}{p}}\Big(\|\mathbf{u}\|^2_{L^{p,
q}(\mathcal{C}(R/2,R))}+\|\mathbf{b}\|^2_{L^{p,
q}(\mathcal{C}(R/2,R))}\Big).
     \end{split}
\end{equation*}
For $ \mathcal{I}_2$, by H\"{o}lder and  Young inequalities, it
yields
\begin{equation*}\label{3-5}
     \begin{split}
   | \mathcal{I}_2|&\leq C(\lambda+\nu) \int_{\mathcal{C}(R/2,R)}| \operatorname{div} \mathbf{u}|| \mathbf{u}||\nabla \varphi_R| \, dx\\
   &\leq \frac{(\lambda+\nu)}{2} \| \operatorname{div} \mathbf{u}\|_{L^2(\mathbb{R}^3)}^2+CR^{1-\frac{6}{p}}\|\mathbf{u}\|^2_{L^{p, q}(\mathcal{C}(R/2,R))}.\\
     \end{split}
\end{equation*}
For $ \mathcal{I}_3$, we have 
\begin{equation*}\label{3-6}
     \begin{split}
   | \mathcal{I}_3|&\lesssim R^{-1}\|\rho\|_{L^\infty}\||\mathbf{u}|^3\|_{L^{\frac{p}{3}, \frac{q}{3}}(\mathcal{C}(R/2,R))}
   \|1\|_{L^{\frac{p}{p-3}, \frac{q}{q-3}}(\mathcal{C}(R/2,R))}\leq C R^{2-\frac{9}{p}}\|\mathbf{u}\|_{L^{p,
q}(\mathcal{C}(R/2,R))}^3.
     \end{split}
\end{equation*}
In a same manner as $ \mathcal{I}_3$, by the integration by parts, $
\mathcal{I}_4$ and $ \mathcal{I}_5$ are rewritten by
\begin{equation*}\label{3-6}
     \begin{split}
   | \mathcal{I}_4|+| \mathcal{I}_5|\lesssim R^{2-\frac{9}{p}}\Big(\|\mathbf{u}\|_{L^{p, q}(\mathcal{C}(R/2,R))}^3+\|\mathbf{b}\|_{L^{p, q}(\mathcal{C}(R/2,R))}^3\Big).
     \end{split}
\end{equation*}

Integrating by parts with \eqref{pressure-decom} and
\eqref{pressure-relation}, $\mathcal{I}_6$ becomes
\begin{equation*}
     \begin{split}
   \mathcal{I}_6&=-\int_{\mathbb{R}^3}\frac{a\gamma}{\gamma-1}\rho\nabla P_1\cdot \mathbf{u}\varphi_R \, dx\\
   &=\frac{a\gamma}{\gamma-1}\int_{\mathbb{R}^3}P_1\operatorname{div}(\rho \mathbf{u})\varphi_R\, dx+
   \frac{a\gamma}{\gamma-1}\int_{\mathbb{R}^3}P_1 \rho\mathbf{u}\cdot\nabla(\varphi_R)\, dx\\
   &=\frac{2a\gamma}{\gamma-1}\int_{\mathbb{R}^3}P_1 \rho\mathbf{u}\cdot(\varphi_R\nabla \varphi_R)\, dx,
     \end{split}
\end{equation*}
and thus we have
\begin{equation*}\label{compressible_pressure}
     \begin{split}
  |\mathcal{I}_6|\leq R^{-1}\int_{\mathcal{C}(R/2,R)}(|p_1|+|p_2|)|\mathbf{u}|\, dx.
  \end{split}
\end{equation*}
By Lemmas \ref{Lem2-1} and \ref{Lem2-2}, we have
\begin{equation*}\label{3-6b}
     \begin{split}
  R^{-1}\int_{\mathcal{C}(R/2,R)}|p_1||\mathbf{u}|\, dx&\leq CR^{-1}\|p_1\|_{L^{\frac{p}{2}, \frac{q}{2}}(\mathcal{C}(R/2,R))}\|\mathbf{u}\|_{L^{p, q}(\mathcal{C}(R/2,R))}\|1\|_{L^{\frac{p}{p-3}, \frac{q}{q-3}}(\mathcal{C}(R/2,R))}\\
  &\leq CR^{2-\frac{9}{p}}\|\mathbf{u}\|^3_{L^{p, q}(\mathcal{C}(R/2,R))}
  \end{split}
\end{equation*}
and
\begin{equation*}\label{3-6c}
     \begin{split}
  R^{-1}\int_{\mathcal{C}(R/2,R)}|p_2||\mathbf{u}|\, dx&\leq R^{-1}\|p_2\|_{L^{\frac{p}{p-1}, \frac{q}{q-1}}(\mathcal{C}(R/2,R))}\|\mathbf{u}\|_{L^{p, q}(\mathcal{C}(R/2,R))}\\
  &\leq CR^{\frac{1}{2}-\frac{3}{p}}\|\operatorname{div}\mathbf{u}\|_{{L^2}(\mathbb{R}^3)}\|\mathbf{u}\|_{L^{p, q}(\mathcal{C}(R/2,R))}\\
  &\leq \frac{\nu}{2}\|\nabla \mathbf{u}\|^2_{{L^2}(|x|\leq2R)}+CR^{1-\frac{6}{p}}\|\mathbf{u}\|^2_{L^{p, q}(\mathcal{C}(R/2,R))}.
  \end{split}
\end{equation*}
Therefore,
\begin{equation*}\label{3-7}
     \begin{split}
   |\mathcal{I}_6|&\lesssim R^{2-\frac{9}{p}}\|\mathbf{u}\|^3_{L^{p, q}(\mathcal{C}(R/2,R))}+CR^{1-\frac{6}{p}}\|\mathbf{u}\|_{L^{p, q}(\mathcal{C}(R/2,R))}.
     \end{split}
\end{equation*}

Substituting all estimates of $\mathcal{I}_1$--$\mathcal{I}_6$ into
\eqref{3-3-1} leads to
\begin{equation}\label{3-8-1}
     \begin{split}
     &\nu \int_{|x|\leq R}|\nabla \mathbf{u}|^2\, dx + \frac{(\lambda+\nu)}{2}\int_{|x|\leq R}|\operatorname{div} \mathbf{u}|^2 \, dx \\
     &\lesssim \left(R^{1-\frac{6}{p}}(\|\mathbf{u}\|^2_{L^{p,
     q}(\mathcal{C}(R/2,R))}+\|\mathbf{b}\|^2_{L^{p,
     q}(\mathcal{C}(R/2,R))})
     + R^{2-\frac{9}{p}}(\|\mathbf{u}\|^3_{L^{p, q}(\mathcal{C}(R/2,R))}+\|\mathbf{b}\|^3_{L^{p, q}(\mathcal{C}(R/2,R))})\right)
     \end{split}
\end{equation}
for all $R > 0$.

Passing $R\rightarrow +\infty$ in \eqref{3-8-1}, due to the
assumptions,  we have
\begin{equation*}
     \begin{split}
     \lim_{R\rightarrow +\infty}(\int_{B_{\frac{R}{2}}}|\nabla \mathbf{u}|^2 \, dx + \int_{\frac{R}{2}}|\operatorname{div} \mathbf{u}|^2 \, dx)
     =0,
     \end{split}
\end{equation*}
which implies that $\mathbf{u}\equiv 0\equiv\mathbf{b}$. On the
other hand, we know that $\nabla (a\rho^\gamma)=0$, which implies
that $\rho$=constant on $\mathbb{R}^3$ by means of $\eqref{MHD}_2$.
The proof of Theorem \ref{thm-1} is ended.
\end{proof}

%

\renewcommand{\theequation}{\thesection.\arabic{equation}}
\setcounter{equation}{0} 


\section{Morrey space-Proof of Theorem \ref{Theo:Morrey-sous-critique}}
From \eqref{3-8-1} with the relationship between spaces
\eqref{relarion-ll}, for all $R
> 0$, we know
\begin{equation}\label{3-8-2}
     \begin{split}
     &\nu \int_{|x|\leq R}|\nabla \mathbf{u}|^2\, dx + \frac{(\lambda+\nu)}{2}\int_{|x|\leq R}|\operatorname{div} \mathbf{u}|^2 \, dx \\
     &\lesssim \left(R^{1-\frac{6}{p}}(\|\mathbf{u}\|^2_{L^{p}(\mathcal{C}(R/2,R))}+\|\mathbf{b}\|^2_{L^{p}(\mathcal{C}(R/2,R))})
     + R^{2-\frac{9}{p}}(\|\mathbf{u}\|^3_{L^{p}(\mathcal{C}(R/2,R))}+\|\mathbf{b}\|^3_{L^{p}(\mathcal{C}(R/2,R))})\right).
     \end{split}
\end{equation} 
Using the definition of Morrey space, the inequality \eqref{3-8-2}
becomes
\begin{equation*}
     \begin{split}
   &\int_{B_{\frac{R}{2}}}\Big(|\nabla \mathbf{u}|^2+|\nabla \mathbf{b}|^2\Big) \varphi_R\, dx +\int_{\frac{R}{2}}|\operatorname{div} \mathbf{u}|^2\, dx\\
   &  \lesssim R^{2-\frac{9}{r}}(\|\mathbf{u}\|^2_{\dot{M}^{p,r}}+\|\mathbf{b}\|^2_{\dot{M}^{p,r}})
   \|\mathbf{u}\|_{\dot{M}^{p,r}}
   +R^{1-\frac{6}{r}}\Big(\|\mathbf{u}\|^2_{\dot{M}^{p,r}}+\|\mathbf{b}\|^2_{\dot{M}^{p,r}}\Big),
     \end{split}
\end{equation*}
which implies $\mathbf{u}\equiv0\equiv\mathbf{b}$ in $\R^3$ as
$R\rightarrow\infty$. The proof of Theorem
\ref{Theo:Morrey-sous-critique} is complete.

\section{Proof of Theorem \ref{Th1}}

From \eqref{3-8-1}, for all $R > 0$, we know
\begin{equation}\label{3-8-3}
     \begin{split}
     &\nu \int_{|x|\leq R}|\nabla \mathbf{u}|^2\, dx + \frac{(\lambda+\nu)}{2}\int_{|x|\leq R}|\operatorname{div} \mathbf{u}|^2 \, dx \\
     &\lesssim \left(R^{1-\frac{6}{p}}(\|\mathbf{u}\|^2_{L^{p}(\mathcal{C}(R/2,R))}+\|\mathbf{b}\|^2_{L^{p}(\mathcal{C}(R/2,R))})
     + R^{2-\frac{9}{p}}(\|\mathbf{u}\|^3_{L^{p}(\mathcal{C}(R/2,R))}+\|\mathbf{b}\|^3_{L^{p}(\mathcal{C}(R/2,R))})\right).
     \end{split}
\end{equation}
Note that by definition of the local Morrey spaces
$M^{3}_{\gamma,0}(\R^3)$, we have $\U , \mathbf{b}\in
L^{3}_{loc}(\R^3)$ and thus, letting $p=3$, the estimate
\eqref{3-8-3} becomes
\begin{equation*}\label{3-3}
     \begin{split}
   &\int_{B_{\frac{R}{2}}}\Big(|\nabla \mathbf{u}|^2+|\nabla \mathbf{b}|^2\Big) \varphi_R\, dx +\int_{\frac{R}{2}}|\operatorname{div} \mathbf{u}|^2\, dx\\
   &  \lesssim R^{-1}(\|\mathbf{u}\|^3_{L^{3}(\mathcal{C}(R/2,R))}+\|\mathbf{b}\|^3_{L^{3}(\mathcal{C}(R/2,R))})
   +CR^{-1}(\|\mathbf{u}\|^2_{L^{3}(\mathcal{C}(R/2,R))}+\|\mathbf{b}\|^2_{L^{3}(\mathcal{C}(R/2,R))}).
     \end{split}
\end{equation*}
And thus, we again write for $1\leq \gamma<3/2$
\begin{eqnarray}
& &\int_{B_{\frac{R}{2}}}\Big(|\nabla \mathbf{u}|^2+|\nabla \mathbf{b}|^2\Big) \varphi_R\, dx +\int_{\frac{R}{2}}|\operatorname{div} \mathbf{u}|^2\, dx\nonumber\\
&\lesssim&
R^{-1}(\|\mathbf{u}\|^2_{L^{2}(\mathcal{C}(R/2,R))}+\|\mathbf{b}\|^2_{L^{2}(\mathcal{C}(R/2,R))})
+\left(\frac{1}{R^{\gamma}} \int_{\mathcal{C}(R/2, R)} \vert \U
\vert^3 dx\right)^{2/3} R^{\gamma-1} \left( \frac{1}{R^{\gamma}}
\int_{\mathcal{C}(R/2, R)} \vert \U \vert^3 dx \right)^{1/3}\nonumber\\
&+&  \left(\frac{1}{R^{\gamma}} \int_{\mathcal{C}(R/2, R)} \vert \B
\vert^3 dx\right)^{2/3} R^{\gamma-1} \left( \frac{1}{R^{\gamma}}
\int_{\mathcal{C}(R/2, R)} \vert \B \vert^3 dx \right)^{1/3}\nonumber\\
&\lesssim&
R^{-1}\Big(\|\mathbf{u}\|^2_{L^{2}(\mathcal{C}(R/2,R))}+\|\mathbf{b}\|^2_{L^{2}(\mathcal{C}(R/2,R))}\Big)\nonumber\\
&+&   R^{\gamma-1}\left[\Vert \U \Vert^{2}_{M^{3}_{\gamma}} \left(
\frac{1}{R^{\gamma}} \int_{\mathcal{C}(R/2, R)} \vert \U \vert^3 dx
\right)^{1/3}+ \Vert \mathbf{b} \Vert^{2}_{M^{3}_{\gamma}}  \left(
\frac{1}{R^{\gamma}} \int_{\mathcal{C}(R/2, R)} \vert \B \vert^3 dx
\right)^{1/3}\right]\label{Lm-20}.
\end{eqnarray}
We shall study each term in the right-hand side. For the first and
second term in \eqref{Lm-20}, we have
\begin{equation}\label{lim-1}
\lim_{R \to +\infty} \frac{c}{R^2}\int_{\mathcal{C}(R/2,R)}\vert \U
\vert^2 dx=0,\quad \mbox{and}\quad \lim_{R \to +\infty}
\frac{c}{R}\Big(\int_{\mathcal{C}(R/2,R)}\vert \U \vert^3
dx\Big)^{2/3}=0.
\end{equation}
Indeed,
$$ \frac{c}{R^2} \int_{\mathcal{C}(R/ 2, R)}\vert \U \vert^{2}dx \leq c\, R^{-1} \left( \int_{\mathcal{C}(R/ 2, R)} \vert \U \vert^3 \right)^{2/3}
 \leq c \,R^{\frac{2}{3}\gamma-1} \Vert \U \Vert^2_{M^{3}_{\gamma}}.$$ \\
and also through a same computation, we have
$$ R^{-1}\|\mathbf{u}\|^2_{L^{3}(\mathcal{C}(R/2,R))}
\leq c \,R^{\frac{2}{3}\gamma-1} \Vert \U
\Vert^2_{M^{3}_{\gamma}}.$$ Due to $\frac{2}{3}\gamma-1<0$, hence
(\ref{lim-1}) follows. Also as $R\rightarrow\infty$, the remains
term in the estimate above vanish depending on the parameter
$\gamma$ (see \cite{Jarrin20-1} for a detailed proof). Hence, we
have the identities $\U=0$ and $\mathbf{b} =0$ and therefore,
Theorem \ref{Th1} is proven.

\section{Proof of Theorem \ref{Theo:Lorentz}}

Before proving Theorem \ref{Theo:Lorentz}, we see the following
Caccioppoli type estimate:
\begin{lem}\label{Prop-Base} Let $\mathcal{C}(R/ 2, R) = \{ x
\in \R^3: R/2 < \vert x \vert < R \}$. If the solution $(u,b)$
verifies $u \in L^{p}_{loc}(\R^3)$ and $\nabla u \in
L^{\frac{p}{2}}_{loc}(\R^3)$   with  $3 \leq p< +\infty$, then for
all $R>1$ we have
    \begin{equation*}\label{Ineq-base}
    \begin{split}
      &  \int_{B_{R/2}} \vert \nabla  \mathbf{u} \vert^2
dx+\int_{B_{R/2}} \vert \nabla  \mathbf{b} \vert^2 dx \lesssim
\left( \int_{\mathcal{C}(R/2,R)}\vert \nabla  \mathbf{u}
\vert^{\frac{p}{2}} dx \right)^{\frac{2}{p}} R^{2-\frac{9}{p}}
\left( \int_{\mathcal{C}(R/2,R)} \vert \mathbf{u} \vert^{p} dx
\right)^{\frac{1}{p}}\\
& +\left( \int_{\mathcal{C}(R/2,R)}\vert \nabla  \mathbf{b}
\vert^{\frac{p}{2}} dx \right)^{\frac{2}{p}} R^{2-\frac{9}{p}}
\left( \int_{\mathcal{C}(R/2,R)} \vert \mathbf{b} \vert^{p}
dx \right)^{\frac{1}{p}}  \\
&  + \Big[\left( \int_{\mathcal{C}(R/2,R)} \vert \mathbf{u}\otimes
\mathbf{u} \vert^{\frac{p}{2}}dx \right)^{\frac{2}{p}}+\left(
\int_{\mathcal{C}(R/2,R)} \vert \mathbf{b}\otimes \mathbf{b}
\vert^{\frac{p}{2}}dx \right)^{\frac{2}{p}}\Big] R^{2-\frac{9}{p}}
\left( \int_{\mathcal{C}(R/2,R)} \vert \mathbf{u}\vert^{p} dx
\right)^{\frac{1}{p}}.
      \end{split}
    \end{equation*}
\end{lem}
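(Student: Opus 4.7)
\textbf{Proof plan for Lemma \ref{Prop-Base}.} The plan is to adapt the Caccioppoli-type argument from the proof of Theorem \ref{thm-1} to the incompressible MHD system \eqref{MHD-incompressible}, balancing the cutoff powers so that the right-hand side fits a single H\"older triple with exponents $(p/2,\,p,\,p/(p-3))$; the requirement $p\geq 3$ is exactly the condition making this triple admissible.

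First I would test $\eqref{MHD-incompressible}_{1}$ against $\mathbf{u}\varphi_R^{k}$ and $\eqref{MHD-incompressible}_{2}$ against $\mathbf{b}\varphi_R^{k}$, where $k\geq 2$ is chosen so that integration by parts can redistribute the cutoff among the various factors in H\"older's inequality. Using $\operatorname{div}\mathbf{u}=0=\operatorname{div}\mathbf{b}$, the coercive terms $\nu\int|\nabla\mathbf{u}|^{2}\varphi_{R}^{k}\,dx+\int|\nabla\mathbf{b}|^{2}\varphi_{R}^{k}\,dx$ emerge on the left-hand side and bound $\int_{B_{R/2}}(|\nabla\mathbf{u}|^{2}+|\nabla\mathbf{b}|^{2})\,dx$ from below. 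The right-hand side decomposes into three families, all supported in the shell $\mathcal{C}(R/2,R)$: (i) viscous corrections $\nu\int\nabla\mathbf{u}:(\mathbf{u}\otimes\nabla\varphi_{R}^{k})$ and its magnetic analogue; (ii) convective/magnetic contributions such as $\int(\mathbf{u}\otimes\mathbf{u}-\mathbf{b}\otimes\mathbf{b}):(\nabla\varphi_{R}^{k}\otimes\mathbf{u})$ and $\int(\mathbf{u}\otimes\mathbf{b}-\mathbf{b}\otimes\mathbf{u}):(\nabla\varphi_{R}^{k}\otimes\mathbf{b})$; and (iii) the pressure contribution $-\int\pi\,\mathbf{u}\cdot\nabla\varphi_{R}^{k}$, where $\operatorname{div}\mathbf{u}=0$ has been used to move $\nabla$ off $\pi$.

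For the viscous terms, H\"older's inequality with the triple $(p/2,\,p,\,p/(p-3))$ together with $|\nabla\varphi_{R}^{k}|\lesssim R^{-1}$ on $\mathcal{C}(R/2,R)$ and $|\mathcal{C}(R/2,R)|\lesssim R^{3}$ produces the factor $R^{-1}\cdot R^{3(p-3)/p}=R^{2-9/p}$, giving exactly the first two lines of the claimed estimate. The convective and magnetic-cross terms are handled by the same H\"older triple, writing $|\mathbf{u}|^{2}=|\mathbf{u}\otimes\mathbf{u}|$ and $|\mathbf{b}|^{2}=|\mathbf{b}\otimes\mathbf{b}|$ to match the target form, and this produces the third line.

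The main obstacle is the pressure term, since $\pi$ is defined only implicitly through the momentum equation. My plan is to invoke a localized Calder\'on-Zygmund representation in the spirit of Lemma \ref{Lem3-1}: taking the divergence of $\eqref{MHD-incompressible}_{1}$ yields $\Delta\pi=-\partial_{i}\partial_{j}(u_{i}u_{j}-b_{i}b_{j})$, so $\pi=R_{i}R_{j}(u_{i}u_{j}-b_{i}b_{j})+h$ for some harmonic $h$, which is absorbed by the same normalization argument used after Lemma \ref{Lem3-1}. Since the Riesz transforms are bounded on $L^{p/2}(\R^{3})$ for $p>2$, one obtains $\|\pi\|_{L^{p/2}(\mathcal{C}(R/2,R))}\lesssim \|\mathbf{u}\otimes\mathbf{u}\|_{L^{p/2}}+\|\mathbf{b}\otimes\mathbf{b}\|_{L^{p/2}}$ on a slightly enlarged shell, and one further application of the H\"older triple $(p/2,\,p,\,p/(p-3))$ recasts the pressure contribution into the same cubic $|\mathbf{u}\otimes\mathbf{u}|^{p/2}$ and $|\mathbf{b}\otimes\mathbf{b}|^{p/2}$ terms already present in (ii), closing the estimate.
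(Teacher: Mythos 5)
Your overall strategy---testing against $\mathbf{u}\varphi_R^k$ and $\mathbf{b}\varphi_R^k$, extracting the Dirichlet integrals on the left, and running every shell term through the H\"older triple $(p/2,\,p,\,p/(p-3))$ so that $|\nabla\varphi_R|\lesssim R^{-1}$ and $|\mathcal{C}(R/2,R)|\lesssim R^3$ combine into the factor $R^{-1}\cdot R^{3(p-3)/p}=R^{2-\frac{9}{p}}$---is exactly the route taken in the paper's proof of Lemma \ref{Prop-Base} (the paper uses $k=1$, and, somewhat incongruously, writes the proof for the compressible system, keeping the $\rho$, $\operatorname{div}\mathbf{u}$ and $\nabla P$ terms and invoking the decomposition $P=c+p_1+p_2$ of Lemma \ref{Lem3-1}; your incompressible bookkeeping is in fact more consistent with how the lemma is then applied in Theorem \ref{Theo:Lorentz}). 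The treatment of the viscous and convective terms is correct and matches the paper's $\mathcal{M}_1$--$\mathcal{M}_5$.

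The step that does not go through as you state it is the pressure bound. Global $L^{p/2}(\R^3)$-boundedness of $\mathcal{R}_i\mathcal{R}_j$ yields $\Vert \pi\Vert_{L^{p/2}(\R^3)}\lesssim \Vert \mathbf{u}\otimes\mathbf{u}\Vert_{L^{p/2}(\R^3)}+\Vert \mathbf{b}\otimes\mathbf{b}\Vert_{L^{p/2}(\R^3)}$; it does \emph{not} yield $\Vert \pi\Vert_{L^{p/2}(\mathcal{C}(R/2,R))}\lesssim \Vert \mathbf{u}\otimes\mathbf{u}\Vert_{L^{p/2}}+\Vert \mathbf{b}\otimes\mathbf{b}\Vert_{L^{p/2}}$ over a slightly enlarged shell, because $\mathcal{R}_i\mathcal{R}_j$ is nonlocal: the value of $\pi$ on the annulus depends on $u\otimes u-b\otimes b$ everywhere in $\R^3$. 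To localize one must split $u_iu_j-b_ib_j$ into a piece supported near the annulus (handled by Calder\'on--Zygmund) and a far piece handled through the kernel decay $|\partial_i\partial_j E(x-y)|\lesssim |x-y|^{-3}$, and the far piece does not reduce to the shell integrals appearing on the right-hand side of the lemma without further argument. To be fair, the paper's own proof has the identical weakness: it bounds $\Vert p_1\Vert$ on the annulus by $\Vert |\mathbf{u}|^2\Vert$ on the same annulus by appealing to Lemma \ref{Lem2-2}, which concerns the operator $f\mapsto \nabla^2(-\Delta)^{-1}f$ on a bounded domain and not the restriction of the global operator to a subdomain. So you have reproduced the paper's argument, gap included; a complete proof requires the near/far decomposition (or a reformulation of the lemma with a global, or enlarged-domain, norm of $\mathbf{u}\otimes\mathbf{u}$ in the pressure term). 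A minor additional point: the absorption of the harmonic correction $h$ should be justified by noting that $\pi-\mathcal{R}_i\mathcal{R}_j(u_iu_j-b_ib_j)$ is a harmonic tempered distribution, hence a polynomial, which the integrability hypotheses reduce to a constant that disappears from $\int \pi\,\mathbf{u}\cdot\nabla\varphi_R^k\,dx$ only after using $\operatorname{div}\mathbf{u}=0$ once more.
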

\begin{proof}
Note that
\[
-\sum_{i,j=1}^{3}\int_{\R^3}\partial_{jj} v_i\cdot v_i\varphi_R\,dx
=\sum_{i,j=1}^{3}\int_{\R^3}\partial_{j} v_i\cdot
\partial_{j}v_i\varphi_R\,dx+\int_{\R^3}\partial_{j} v_i\cdot
v_i\partial_{j}\varphi_R\,dx
\]
\begin{equation}\label{aaa}
=\int_{\R^3}|\nabla
\mathbf{v}|^2\varphi_R\,dx+\sum_{i,j=1}^{3}\int_{\R^3}\partial_{j}
v_i\cdot v_i\partial_{j}\varphi_R\,dx.
\end{equation} Using \eqref{aaa},
\begin{eqnarray} \label{eq03} \nonumber
 \int_{B_{R/2}} (\vert \nabla \mathbf{u} \vert^{2} &+&\vert \nabla \mathbf{b} \vert^{2})\,dx  =
   - \sum_{i,j=1}^{3}\int_{B_R} \Big[(\partial_j u_i) (\partial_j \varphi_R) u_i +(\partial_j b_i) (\partial_j \varphi_R) b_i \Big]dx  \\ \nonumber
 & &-2(\lambda+\nu)\int_{\mathcal{C}(R/2,R)} \operatorname{div} \mathbf{u} \mathbf{u}\cdot \nabla \varphi_R \, dx
 + \int_{B_R}(u \cdot \nabla)u\cdot  \varphi_R u  dx\\ \nonumber
  & &   -\int_{\mathcal{C}(R/2,R)}|\mathbf{u}|^2 \nabla \varphi_R\cdot\rho \mathbf{u} \, dx +\int_{\mathcal{C}(R/2,R)}|\mathbf{b}|^2 \nabla \varphi_R\cdot \mathbf{u} \, dx, \\
 & & - \int_{\mathcal{C}(R/2,R)}\nabla P\cdot \mathbf{u}\varphi_R\, dx:=\sum_{i=1}^6\mathcal{M}_i.
 \end{eqnarray}
We study now these four terms above. In term $I_1$ remark that we
have the function $\partial_i \varphi_R$, but since the test
function $\varphi_R$ verifies $\varphi_R(1)$ if $\vert x \vert
<\frac{R}{2}$ and $\varphi_R(x)=0$  if $\vert x \vert
> R$ then we have $supp\,(\nabla \varphi_R) \subset
\mathcal{C}(R/2,R)$, and thus we can write
$$ \mathcal{M}_1= -\sum_{i,j=1}^{3} \int_{\mathcal{C}(R/2,R)} \partial_j u_i (\partial_j \varphi_R) u_i dx
-\sum_{i,j=1}^{3} \int_{\mathcal{C}(R/2,R)} \partial_j b_i
(\partial_j \varphi_R) b_i dx.$$

Then, applying the Hold\"er inequality, we have
\begin{eqnarray}\label{eq06} \nonumber
\int_{\mathcal{C}(R/2,R)} \partial_j u_i (\partial_j \varphi_R) u_i
dx&\lesssim &  \left( \int_{\mathcal{C}(R/2,R)}\vert \nabla u
\vert^{\frac{p}{2}} dx \right)^{\frac{2}{p}}  \frac{1}{R} \left(
\int_{\mathcal{C}(R/2,R)} \vert u \vert^{q}
 dx\right)^{\frac{1}{q}}\\ \nonumber
 &\lesssim &  \left( \int_{\mathcal{C}(R/2,R)}\vert \nabla u
\vert^{\frac{p}{2}} dx \right)^{\frac{2}{p}} R^{2-\frac{9}{p}}
\left(  \int_{\mathcal{C}(R/2,R)} \vert u \vert^{p} dx
\right)^{\frac{1}{p}}.
\end{eqnarray}
With this estimate at hand  $\mathcal{M}_1$ is bounded by
\begin{equation*}\label{eq07}
\left( \int_{\mathcal{C}(R/2,R)}\vert \nabla \mathbf{u}
\vert^{\frac{p}{2}} dx \right)^{\frac{2}{p}} R^{2-\frac{9}{p}}
\left(  \int_{\mathcal{C}(R/2,R)} \vert \mathbf{u} \vert^{p} dx
\right)^{\frac{1}{p}}+\left( \int_{\mathcal{C}(R/2,R)}\vert \nabla
\mathbf{b} \vert^{\frac{p}{2}} dx \right)^{\frac{2}{p}}
R^{2-\frac{9}{p}} \left( \int_{\mathcal{C}(R/2,R)} \vert \mathbf{b}
\vert^{p} dx \right)^{\frac{1}{p}}
\end{equation*}
In a same manner as $\mathcal{M}_1$, $\mathcal{M}_2$ is estimated by
\begin{equation*}\label{eq07}
\left( \int_{\mathcal{C}(R/2,R)}\vert \nabla \mathbf{u}
\vert^{\frac{p}{2}} dx \right)^{\frac{2}{p}} R^{2-\frac{9}{p}}
\left(  \int_{\mathcal{C}(R/2,R)} \vert \mathbf{u} \vert^{p} dx
\right)^{\frac{1}{p}}
\end{equation*}
We study the term $\mathcal{M}_4$ and $\mathcal{M}_5$ in
(\ref{eq03}).  we have
\begin{equation*}\label{eq11}
\mathcal{M}_3+\mathcal{M}_4+\mathcal{M}_5  \lesssim  \Big[\left(
\int_{\mathcal{C}(R/2,R)} \vert \mathbf{u}\otimes \mathbf{u}
\vert^{\frac{p}{2}}dx \right)^{\frac{2}{p}}+\left(
\int_{\mathcal{C}(R/2,R)} \vert \mathbf{b}\otimes \mathbf{b}
\vert^{\frac{p}{2}}dx \right)^{\frac{2}{p}}\Big] R^{2-\frac{9}{p}}
\left( \int_{\mathcal{C}(R/2,R)} \vert \mathbf{u}\vert^{p} dx
\right)^{\frac{1}{p}}.
\end{equation*}
Here, for the estimate $\mathcal{M}_3$, in addition, we use
$\|\rho\|_{L^\infty}<\infty$. Finally, From \eqref{pressure-decom}
and \eqref{pressure-relation}, we recall that
\[
\mathcal{M}_6=\frac{2a\gamma}{\gamma-1}\int_{\mathbb{R}^3}P_1
\rho\mathbf{u}\cdot(\varphi_R\nabla \varphi_R)\, dx,
\]
and from \eqref{3-6a}.
\[
|P_1\rho|\leq C(a, \|\rho\|_{L^\infty})(|p_1|+|p_2|).
\]
in a similar way,
\begin{equation*}\label{eq11}
\mathcal{M}_6 \lesssim  \Big[\left( \int_{\mathcal{C}(R/2,R)} \vert
\nabla \mathbf{u} \vert^{\frac{p}{2}}dx \right)^{\frac{2}{p}}+\left(
\int_{\mathcal{C}(R/2,R)} \vert |\mathbf{u}|^2 \vert^{\frac{p}{2}}dx
\right)^{\frac{2}{p}}\Big] R^{2-\frac{9}{p}} \left(
\int_{\mathcal{C}(R/2,R)} \vert \mathbf{u}\vert^{p} dx
\right)^{\frac{1}{p}}.
\end{equation*}
Collecting all $\mathcal{M}_i$, we proved Lemma \eqref{Prop-Base}.
\end{proof}

\begin{proof}[Proof of Theorem \ref{Theo:Lorentz}]
Following the arguments in \cite{Jarrin20}, for $1<p<r \leq q <
+\infty$ and for $R>1$ we have
\begin{equation}\label{estim-Lorentz}
\int_{B_R} \vert \U \vert^{p} dx \leq c \, \, R^{3(1-\frac{p}{r})}
\Vert \U \Vert^{p}_{L^{r,\infty}} \leq c   \, R^{3(1-\frac{p}{r})}
\Vert \U \Vert^{p}_{L^{r,q}}.
\end{equation}
On the other hand, since
$$ \U = - \frac{1}{\Delta} \left( \P \left(( \U \cdot \vec{\nabla}) \U \right)-\P \left(( \B \cdot \vec{\nabla}) \B \right) \right),$$
 we have for $i=1,2,3$
\[
 \partial_i \U = -  \sum_{j=1}^{3}  \frac{1}{\Delta} \left( \P \left( \partial_i  \partial_j (u_j \U) \right) -\P \left( \partial_i  \partial_j (B_j \B) \right) \right)
 = \sum_{j=1}^{3} \P \left( \mathcal{R}_{i} \mathcal{R}_{j} (u_j \U) -\mathcal{R}_{i} \mathcal{R}_{j} (B_j \B)  \right),
\]  where $\mathcal{R}_{i}= \frac{\partial_i}{\sqrt{-\Delta}}$ denotes the i-th Riesz transform. Thus, 
\begin{equation}\label{estim-Lorentz-2}
\int_{B_R} \vert \vec{\nabla} \U \vert^{\frac{p}{2}} dx \leq c\,
R^{3(1-\frac{p/2}{r/2})} \Big(\Vert \U \otimes  \U
\Vert^{\frac{p}{2}}_{L^{\frac{r}{2},\frac{q}{2}}}+\Vert \B \otimes
\B \Vert^{\frac{p}{2}}_{L^{\frac{r}{2},\frac{q}{2}}}\Big),\quad
3\leq p <+\infty.
\end{equation}
Similarly, we can know
\begin{equation}\label{estim-Lorentz-3}
\int_{B_R} \vert \vec{\nabla} \B \vert^{\frac{p}{2}} dx \leq c\,
R^{3(1-\frac{p/2}{r/2})} \Big(\Vert \U \otimes  \B
\Vert^{\frac{p}{2}}_{L^{\frac{r}{2},\frac{q}{2}}}+\Vert \B \otimes
\U \Vert^{\frac{p}{2}}_{L^{\frac{r}{2},\frac{q}{2}}}\Big),\quad
3\leq p <+\infty.
\end{equation}

Through Lemma \ref{Prop-Base},  we write for all $R>1$
\begin{equation} \label{eq13}
\int_{B_{R/2}} \vert \nabla \U \vert^2 dx+\int_{B_{R/2}} \vert
\nabla \B \vert^2 dx\lesssim G_{u}P_u+G_{b}P_b+(S_{u}+S_{b})P_u,
\end{equation}
where
\[
G_{u}:=c \left( R^{\frac{6}{r}} \left( \frac{1}{R^3}
\int_{\mathcal{C}(R/2,R)} \vert \nabla\otimes
\U\vert^{\frac{p}{2}}dx \right)^{\frac{2}{p}},\quad
S_{u}:=R^{\frac{6}{r}} \left(\frac{1}{R^3} \int_{\mathcal{C}(R/2,R)}
\vert \U \otimes \U\vert^{\frac{p}{2}}dx \right)^{\frac{2}{p}}
\right),
\]
\[
G_{b}:=c R^{\frac{6}{r}} \left( \frac{1}{R^3}
\int_{\mathcal{C}(R/2,R)} \vert \nabla\otimes
\B\vert^{\frac{p}{2}}dx \right)^{\frac{2}{p}},\quad
S_{b}:=R^{\frac{6}{r}} \left(\frac{1}{R^3} \int_{\mathcal{C}(R/2,R)}
\vert \B \otimes \B\vert^{\frac{p}{2}}dx \right)^{\frac{2}{p}},
\]
\[
P_u:=R^{2-\frac{9}{r}}  \left( R^{\frac{3}{r}} \left( \frac{1}{R^3}
\int_{\mathcal{C}(R/2,R)} \vert \U \vert^{p} dx
\right)^{\frac{1}{p}}\right),\quad P_b:=R^{2-\frac{9}{r}}  \left(
R^{\frac{3}{r}} \left( \frac{1}{R^3} \int_{\mathcal{C}(R/2,R)} \vert
\B \vert^{p} dx \right)^{\frac{1}{p}}\right).
\]
For  this  we introduce  the cut-off function  $\theta_R \in
\mathcal{C}^{\infty}_{0}(\R^3)$ such that  $\theta_R =1$ on  $
\mathcal{C}(R/2,R)$, $supp\, (\theta_R) \subset \mathcal{C}(R/4,
2R)$ and $\Vert \nabla \theta_R \Vert_{L^{\infty}} \leq
\frac{c}{R}$. In the same approach in \cite{Jarrin20} with
\eqref{estim-Lorentz}--\eqref{estim-Lorentz-3}, \eqref{eq13} yields
\begin{equation*}\label{eq18}
\int_{B_{\frac{B}{2}}} \Big(\vert \nabla \U \vert^2 dx+\vert \nabla
\B \vert^2 dx\Big) \leq c \left( \Vert \theta_R (\nabla
\U)\Vert_{L^{\frac{r}{2},\frac{q}{2}}} + \Vert \theta_R \U
\Vert^{2}_{L^{r,q}}\right) \, R^{2-\frac{9}{r}} \Vert \U
\Vert_{L^{r,q}(\mathcal{C}(R/4,R))},
\end{equation*}
\[
+ c \Vert \theta_R (\nabla
\B)\Vert_{L^{\frac{r}{2},\frac{q}{2}}}R^{2-\frac{9}{r}}    \Vert \B
\Vert_{L^{r,q}(\mathcal{C}(R/4,R))} + c \Vert \theta_R \B
\Vert^{2}_{L^{r,q}} R^{2-\frac{9}{r}} \Vert \U
\Vert_{L^{r,q}(\mathcal{C}(R/4,R))}.
\]
This implies $\U\equiv0\equiv\B$ in $\R^3$ under the assumptions in
Theorem \ref{Theo:Lorentz}.
\end{proof}

\section{Proof of Theorem \ref{mixed-norm}}
For each $q \in [1, \infty)$, a non-negative measurable function
$\omega: \mathbb{R}^n \rightarrow \mathbb{R}$ is said to be in the
Muckenhoupt  $A_q(\bR^n)$-class if
\[
[\omega]_{A_q} := \displaystyle{\sup_{R>0 , x_0 \in \bR^n}
\left(\frac{1}{|B_R(x_0)|} \int_{B_R(x_0)} \omega(x) dx \right)
\left(\frac{1}{B_R(x_0)} \int_{B_R(x_0)} \omega(x)^{-\frac{1}{q-1}}
dx  \right)^{q-1}} < \infty
\]
for  $q \in (1, \infty)$, and
\[
[\omega]_{A_1} := \displaystyle{\sup_{R>0 , x_0 \in \bR^n}
\left(\frac{1}{|B_R(x_0)|} \int_{B_R(x_0)} \omega(x) dx \right)
\|\frac{1}{\omega}\|_{L_\infty(B_R(x_0)}} < \infty,
\]
where $B_R(x_0)$ denotes the ball in $\bR^3$ with the radius $R$ and
centered at $x_0 \in \bR^3$.

We introduce the following lemma on weighted mixed norm estimates
for the pressure of the equations \eqref{MHD}. This lemma is an key
ingredient in the paper.
\begin{lem} \label{pressure-lemma} Let $q, r \in (2, \infty)$ and $M_0 \geq 1$. Assume that $\U, \B \in L_{q, r}(\R^3, \omega)$ with $\omega(x) =\omega_1(x') \omega_2(x_3)$ for all a.e. $x = (x', x_3) \in \R^2 \times \R$, and for $\omega_1 \in A_{\frac{q}{2}}(\R^2)$, $\omega_2 \in A_{\frac{r}{2}}(\R)$ with
\[
[\omega_1]_{A_{\frac{q}{2}}(\R^2)} \leq M_0, \quad
[\omega_2]_{A_{\frac{r}{2}}(\R)} \leq M_0.
\]
Then, there exists $N = N(q,r, M_0) >0$ such that
\[
\|P\|_{L_{\frac{q}{2}, \frac{r}{2}}(\R^3, \omega)} \leq N
\Big(\|\U\|_{L_{q, r}(\R^3, \omega)}^2+\|\B\|_{L_{q, r}(\R^3,
\omega)}^2\Big),
\]
where $P$ is defined as
\begin{equation} \label{pressure-formula}
P =\sum_{i, j =1}^3 \Big(\mathcal{R}_i \mathcal{R}_j (u_i
u_j)+\mathcal{R}_i \mathcal{R}_j (b_i b_j)\Big),
\end{equation}
in which $\mathcal{R}_i $ denotes the $i$-th Riesz transform.
\end{lem}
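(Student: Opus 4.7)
My plan is to combine two ingredients: (i) the weighted mixed-norm boundedness of the composed Riesz transforms $\mathcal{R}_i\mathcal{R}_j$ on $L_{q/2,r/2}(\R^3,\omega)$, and (ii) a Hölder/Cauchy--Schwarz bound for the quadratic products $u_iu_j$ and $b_ib_j$ in the weighted mixed-norm space. From the representation \eqref{pressure-formula}, once both ingredients are in place the conclusion follows by summing over the nine pairs $(i,j)\in\{1,2,3\}^2$.

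For ingredient (ii) I would proceed directly. Using the product structure $\omega(x',x_3)=\omega_1(x')\omega_2(x_3)$ and applying Cauchy--Schwarz first in the $x'$-integral against $\omega_1$ (with exponent $q/2$) and then in the $x_3$-integral against $\omega_2^{r/q}$ (with exponent $r/2$), one obtains, for any measurable $f,g$,
\[
\|fg\|_{L_{q/2,r/2}(\R^3,\omega)}\le \|f\|_{L_{q,r}(\R^3,\omega)}\,\|g\|_{L_{q,r}(\R^3,\omega)}.
\]
Specializing to $(f,g)=(u_i,u_j)$ and $(f,g)=(b_i,b_j)$ yields $\|u_iu_j\|_{L_{q/2,r/2}(\omega)}\le \|\U\|_{L_{q,r}(\omega)}^{2}$ and $\|b_ib_j\|_{L_{q/2,r/2}(\omega)}\le \|\B\|_{L_{q,r}(\omega)}^{2}$. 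This step is purely computational and introduces no dependence on $M_0$.

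The harder and more delicate ingredient is (i). The composed Riesz transform $\mathcal{R}_i\mathcal{R}_j$ is a second-order Calderón--Zygmund operator on $\R^3$ whose kernel does \emph{not} factor across the splitting $x=(x',x_3)\in\R^2\times\R$, so one cannot simply Fubini through the mixed norm. Instead, I would rely on a two-parameter (mixed-norm) version of the Rubio de Francia extrapolation theorem: starting from the classical one-weight bound for Calderón--Zygmund operators on $L^p(\R^n,w)$ with $w\in A_p$, extrapolation applied once in the $x'$-variable (using $\omega_1\in A_{q/2}(\R^2)$) and then in the $x_3$-variable (using $\omega_2\in A_{r/2}(\R)$) yields a constant $N=N(q,r,M_0)$ depending only on the stated Muckenhoupt bounds such that
\[
\|\mathcal{R}_i\mathcal{R}_j f\|_{L_{q/2,r/2}(\R^3,\omega)}\le N\,\|f\|_{L_{q/2,r/2}(\R^3,\omega)}.
\]
This is exactly the type of estimate developed in Krylov's mixed-norm weighted theory and extensively exploited by Dong--Kim for elliptic/parabolic equations, so I would cite it rather than redo the extrapolation by hand.

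Combining (i) and (ii) with $f=u_iu_j$ or $f=b_ib_j$ and summing over $i,j$ produces the claimed bound with a constant depending only on $q$, $r$ and $M_0$. The main obstacle is precisely ingredient (i): one needs the weighted mixed-norm boundedness of the second-order Riesz transforms with the \emph{correct} quantitative dependence on the separate Muckenhoupt constants of $\omega_1$ and $\omega_2$, and this non-factoring two-parameter estimate is the technical heart of the lemma. Step (ii), by contrast, is elementary Hölder, and the final assembly in Step 3 is automatic.
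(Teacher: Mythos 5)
Your outline is correct and is essentially the same argument as the paper's, which in fact gives no details at all: it simply states that the proof is ``almost similar to that in \cite[Theorem 1.4]{Phan20}'' (where the weighted mixed-norm boundedness of the second-order Riesz transforms, via the Krylov/Dong--Kim extrapolation machinery you invoke, is combined with exactly your Cauchy--Schwarz step for the quadratic terms). You have correctly identified that ingredient (i) is the technical heart and that the only change from the cited result is the additional $b_ib_j$ terms, which are handled identically.
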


\begin{proof}
This proof is almost similar to that in \cite[Theorem 1.4]{Phan20}
and so we omit the proof.
\end{proof}

\begin{proof}[Proof of Theorem \ref{mixed-norm}]
Let $\phi \in C^\infty_0(\R)$ be a standard cut-off function with $0
\leq \phi \leq 1$ and
\[
\phi =1 \quad \text{on} \quad [-1/2, 1/2] \quad \text{and} \quad
\phi =0 \quad \text{on} \quad \R \setminus [-1, 1].
\]
For each $R>0$,  let $Q_R = [-R, R]^3$ and
$$\phi_R(x) =
\phi(\frac{x_1}{R}) \phi(\frac{x_2}{R})\phi(\frac{x_3}{R}), \quad
x=(x_1, x_2, x_3) \in \R^3.$$ Then, we see that $ \phi_R(x) = 1
\quad \text{on} \quad Q_{R/2}, \quad \phi =0 \quad \text{on} \quad
\R^3\setminus Q_R$. Then, there is a constant $C>0$ independent on
$R$ such that $ |\nabla^k \phi_R| \leq \frac{C}{R^k},\quad k=0,1,2$.
Testing $\U\phi_R$ and  $b\phi_R$ as a test function for the system
\eqref{MHD}, respectively, we have
\begin{align}  \nonumber
\int_{Q_{R/2}}& (|\nabla \U|^2+|\nabla \B|^2) dx\nonumber\\
&\leq \frac{1}{2}\int_{Q_R\setminus Q_{R/2}} (|\U|^2+|\B|^2) |\Delta
\phi_R| dx+ \frac{1}{2}\int_{Q_R\setminus Q_{R/2}} (|\U|^3+|\B|^3)
|\nabla \phi_R| dx\nonumber\\&+ \int_{Q_R\setminus Q_{R/2}} |p| |\U|
|\nabla \phi_R| dx\label{app_test},
\end{align}
where we use
\[
 \int_{Q_R\setminus Q_{R/2}}
|\U||\B|^2  |\nabla \phi_R| dx\leq \frac{1}{2} \int_{Q_R\setminus
Q_{R/2}} (|\U|^3+|\B|^3)  |\nabla \phi_R| dx
\]
From the arguments in  \cite{Phan20}, we can show that the three
terms in \eqref{app_test} becomes to $0$ as $R\rightarrow\infty$
under the assumptions and Lemma \ref{pressure-formula}. Indeed, the
proof of this part is almost same that in \cite{Phan20} and thus we
skip a proof.  Hence, we obtain
\[
\int_{\R^3} \Big(|\nabla  \U|^2+|\nabla  \B|^2\Big) dx =
\lim_{R\rightarrow \infty} \int_{Q_{R/2}}\Big(|\nabla \U|^2+|\nabla
\B|^2\Big) dx =0.
\]
Therefore, $(\U,\B)$ is a constant function in $\R^3$. From this and
the fact that $(\U,\B) \in L_{q,r}(\R^3)$, we conclude that $\U
\equiv 0\equiv\B$ in $\R^3$. The proof is then completed.
\end{proof}

\section*{Acknowledgements}
Jae-Myoung Kim was supported by a Research Grant of Andong National
University.

\end{document}